\newtheorem{theorem}{Theorem}[section]
\newtheorem{cor}[theorem]{Corollary}
\newtheorem{prop}[theorem]{Proposition}
\newtheorem{remark}{Remark}
\theoremstyle{definition}
\newcommand{\p}{\partial}
\newcommand{\Rbb}{\mathbb{R}}
\renewcommand{\div}{\mbox{div}\,}
\newcommand{\Ga}{\alpha}
\newcommand{\Gd}{\delta}
\newcommand{\Ge}{\epsilon}
\newcommand{\Gg}{\gamma}
\newcommand{\Gs}{\sigma}
\newcommand{\GD}{\Delta}
\newcommand{\GG}{\Gamma}
\newcommand{\GO}{\Omega}
\def\ol{\overline}
\def\sm{\setminus}
\DeclareMathOperator*{\essinf}{ess inf}
\DeclareMathOperator{\Lip}{Lip}
\numberwithin{equation}{section}
\numberwithin{figure}{section}
\newcommand{\domain}{\Omega}
\newcommand{\bdry}{\partial \domain}
\newcommand{\bdryin}{\Gamma_{-}}
\newcommand{\bdryout}{\Gamma_{+}}
\newcommand{\bdrychar}{\Gamma_0}
\newcommand{\eps}{\epsilon}
\newcommand{\normal}{n}
\newcommand{\adv}{\beta}
\newcommand{\reac}{\mu}
\newcommand{\source}{f}
\newcommand{\weight}{\adv \cdot \normal}
\newcommand{\weightx}{\adv(x) \cdot \normal(x)}
\newcommand{\norm}[1]{\| #1 \|}
\newcommand{\Euclid}[1]{\mathbb{R}^{#1}}
\newcommand{\restric}[2]{\left. #1 \vphantom{\big|} \right|_{#2}}
\newcommand{\sol}{u}
\newcommand{\solER}{u_\epsilon}
\newcommand{\union}{\cup}
\newcommand{\intersec}{\cap}
\newcommand{\abs}[1]{\left | #1 \right |}
\newcommand{\pos}{\reac_0}
\newcommand{\dd}{\adv \cdot \nabla}
\begin{document}

\title{On strong convergence of an elliptic regularization with the Neumann boundary condition applied to a stationary advection equation}

\author{Massaki Imagawa\thanks{Graduate School of Informatics, Kyoto University (m\_imagawa@acs.i.kyoto-u.ac.jp, d.kawagoe@acs.i.kyoto-u.ac.jp)} and Daisuke Kawagoe\footnotemark[1]}

\date{\today}
\maketitle

\begin{abstract}
We consider a boundary value problem of a stationary advection equation with the homogeneous inflow boundary condition in a bounded domain with Lipschitz boundary, and consider its perturbation by $\Ge \GD$, where $\Ge$ is a positive parameter and $\GD$ is the Laplacian. In this article, we show the $L^2$ strong convergence of solutions as the parameter $\Ge$ tends to $0$, and discuss its convergence rates assuming  $H^1$ or $H^2$ regularity for original solutions. A key observation is that the convergence rate depends on the regularity of original solutions and a relation between the boundary and the advection vector field. Some numerical computations support optimality of our convergence estimates.
\end{abstract}
\section{Introduction}

In this article, we discuss $L^2$ strong convergence of an elliptic regularization with the Neumann boundary condition as well as its convergence rates when we apply it to a stationary advection equation.

We consider the following boundary value problems:
\begin{equation} \label{BVP}
\begin{cases}
\adv \cdot \nabla u + \reac u = f &\mbox{ in } \GO,\\
u = 0 &\mbox{ on } \GG_-,
\end{cases}
\end{equation}
and
\begin{equation} \label{MER}
\begin{cases}
-\Ge \GD u + \adv \cdot \nabla u + \reac u = f &\mbox{ in } \GO,\\
u = 0 &\mbox{ on } \GG_-,\\
\dfrac{\p u}{\p n} = 0 &\mbox{ on } \GG_+ \cup \GG_0.
\end{cases}
\end{equation}
Here, $\domain$ is a bounded domain in $\Euclid{d}$ with Lipschitz boundary $\bdry$, and $\adv \in W^{1, \infty}(\GO)^d$ and $\reac \in L^\infty(\GO)$ are functions satisfying
\begin{equation} \label{b0}
\pos := \essinf_{x \in \GO} \left( \reac(x) - \frac{1}{2} \div \adv(x) \right) > 0,
\end{equation}
and $f \in L^2(\GO)$. In the problem \eqref{MER}, $\Ge$ is a positive parameter and $\GD$ is the Laplacian.

We regard the function $\adv$ as a Lipschitz function on $\ol{\GO}$ and decompose the boundary $\p \GO$ into four parts:
\begin{align*}
\GG_+ :=& \{ x \in \p \domain \mid \weightx > 0 \},\\
\GG_- :=& \{ x \in \p \domain \mid \weightx < 0 \},\\
\GG_0 :=& \{ x \in \p \domain \mid \weightx = 0 \},\\
\GG_e :=& \{ x \in \p \domain \mid n(x) \text{ is not defined}. \},
\end{align*}
where $n(x)$ is the outward unit normal vector at $x \in \p \GO$. We call these subsets the outflow boundary, the inflow boundary, the characteristic boundary, and the exceptional set, respectively. Since the boundary $\p \GO$ is Lipschitz, the surface measure of the exceptional set $\GG_e$ is 0.

We ignore the homogeneous Dirichlet boundary condition on the inflow boundary $\GG_-$ in boundary value problems \eqref{BVP} and \eqref{MER} if its surface measure is 0. In the same way, we ignore the homogeneous Neumann boundary condition in \eqref{MER} if the measure of $\bdryout \union \bdrychar$ is $0$.

As we will see later in Section \ref{sec:pre}, under the assumption \eqref{b0}, the boundary value problem \eqref{MER} is well-posed in $H^1_{\GG_-}(\GO)$, where
\[
H^1_{\GG_-}(\GO) := \{ u \in H^1(\GO) \mid u|_{\GG_-} = 0 \}.
\]
We remark that, for the well-posedness of the boundary value problem \eqref{BVP} in
\[
H_{\adv, -}(\GO) := \{ u \in L^2(\GO) \mid \adv \cdot \nabla u \in L^2(\GO), u|_{\GG_-} = 0 \},
\]
some density result is required, which will be mentioned in Section \ref{sec:pre}.

This singular perturbation problem has been addressed at least since 1970 \cite{1970Ba}, where Bardos also considered the perturbation with the homogeneous Dirichlet boundary condition, and made a remarks on a difference of convergences between that case and ours. As far as the authors know it is Livne and Schuss \cite{1973LS} that gave an explicit convergence rate of the perturbation by degenerate elliptic operators for the first time. Since the Laplacian can be regarded as an degenerate elliptic operator, their result covers ours. Goering et al. \cite{GFLRT} discussed pointwise convergence with its convergence rates in the setting of classical solutions by constructing barrier functions near boundary layers, which also yields the convergence rates in the $L^2$ sense. Temam et al. \cite{GJHT} discussed higher order asymptotic expansions of boundary layers and, as a byproduct, they also derived some convergence rates. For the case with the homogeneous Neumann boundary condition, see \cite{JT}. However, these techniques require classical smoothness of solutions. On the other hand, in terms of error analysis on finite element methods, convergence rates in the $L^2$ space are helpful and hence the standard Sobolev spaces $H^m(\GO)$ are natural for the solution space, which was suggested in \cite{Lube}. This motivates us to discuss convergence rates in the $L^2$ framework.

As a related work, Beir\~{a}o da Veiga \cite{1987Veiga1} made use of the perturbed problem with the homogeneous Dirichlet condition in order to capture $W^{1, p}$, $L^p$ and $W^{-1, p}$ solutions to the stationary advection equation. In his setting $\p \GO = \GG_0$ and the boundary condition to the original problem is posed by assuming that the inhomogeneous term $f$ has a compact support.
Bae et al. \cite{BCJK} extended his results to other function spaces.

In this article, we show strong convergence of solutions $\solER$ to \eqref{MER} to the solution $u$ to \eqref{BVP} and give the following estimate under some assumptions for regularity of $u$:
\begin{align}
&\norm{\sol - \solER}_{L^2(\domain)} + \norm{\sol - \solER}_{L^2(\bdryout;\weight)} \leq C \eps^r, \label{ratesL2}\\
&\norm{\nabla(\sol - \solER)}_{L^2(\domain)} \leq C \eps^{r-\frac{1}{2}}, \label{ratesH1}\\
&\norm{\sol - \solER}_{L^2(\bdrychar)} \leq C \eps^{r-\frac{1}{4}}. \label{ratesL2char}
\end{align}
Here, $r$ is a positive constant we will specify later. It is remarkable that the trace of $\solER$ on $\bdry \sm \bdryin$ converges to that of $\sol$, which does not happen in general when we pose the homogeneous Dirichlet boundary condition on $\bdry$.

The organization of the article is the following: Section \ref{sec:pre} is devoted to introducing several propositions and show well-posedness of \eqref{BVP} and \eqref{MER} as preliminaries.

For the case that solution $\sol$ to \eqref{BVP} belongs to $H_{\adv, -}(\domain)$, we can show strong convergence of $(\solER, \restric{\solER}{\bdryout})$ to $(\sol, \restric{u}{\bdryout})$ in $L^2(\domain) \times L^2(\bdryout;\weight)$, where $\solER$ is the solution to $\eqref{MER}$ (Theorem \ref{thm:strong_convergence}).
This will be done in Section $\ref{sec:Ha}$.
When the surface measure of the outflow boundary $\GG_+$ is 0, Theorem \ref{thm:strong_convergence} just implies the strong convergence of $u_\Ge$ to $u$ in $L^2(\GO)$.
We remark that our approach is different from one in \cite{1970Ba}, where the strong convergence of the trace $\restric{\solER}{\bdryout}$ was not discussed explicitly.

In Section $\ref{sec:H1}$, we show that estimates \eqref{ratesL2}, \eqref{ratesH1}, \eqref{ratesL2char} hold with $r=1/2$ when the solution $u$ to \eqref{BVP} belongs to $H^1_{\bdryin}(\domain)$ (Theorem \ref{thm:1/2convergence}). Moreover, we can prove the strong convergence of $u_\Ge$ to $u$ in the $H^1$ sense in this situation (Corollary \ref{cor:H1_strong}).
We remark that, although we assume in Theorem \ref{thm:1/2convergence} that the solution $u$ to the boundary value problem \eqref{BVP} belongs to $H^1_{\GG_-}(\GO)$, it is hard to give a necessary and sufficient condition for its existence. Some sufficient conditions for the existence were provided in \cite{Lube}. As far as the authors know, such a characterization of $H^1$ solutions is not completed yet. For interested readers, see \cite{2017Be, SP}. Instead of posing a sufficient condition for the existence of the $H^1$ solution, we just assume its regularity in order to focus on discussing its convergence rate.

We further discuss the convergence rate assuming that the solution to the boundary value problem \eqref{BVP} belongs to $H^2(\GO) \intersec H_{\adv, -}(\GO)$ in Section \ref{sec:H2}.
The convergence rate depends on the surface measure of $\GG_0$ and the degeneracy of the inner product $\weight$ on $\GG_+$.
In the case that the surface measure of $\bdrychar$ is positive, we have estimates \eqref{ratesL2}, \eqref{ratesH1}, \eqref{ratesL2char} with $r=3/4$ (Theorem \ref{thm:3/4convergence}).
This result agrees with that suggested in \cite{JT}, where they applied the asymptotic expansion in boundary layers appearing near the characteristic boundary $\GG_0$.
When the surface measure of $\bdrychar$ and $\bdryout$ are 0, estimates \eqref{ratesL2}, \eqref{ratesH1}, \eqref{ratesL2char} hold with $r=1$ (Theorem \ref{thm:1convergence0}).
We note that this case was already considered in \cite{GFLRT}.
We also show that the same convergence estimate holds when the condition that the surface measure of $\bdryout$ is $0$ in Theorem \ref{thm:1convergence0} is replaced by $\essinf_{\bdryout} (\weight)>0$ (Theorem \ref{thm:1convergence}).
It is worth mentioning that the one-dimensional case $d = 1$ is covered by Theorem \ref{thm:3/4convergence}, Theorem \ref{thm:1convergence0}, and Theorem \ref{thm:1convergence}.
If we only assume that the surface measure of $\bdrychar$ is 0 in multi-dimensional case $d \geq 2$, the exponent $r$ in estimates \eqref{ratesL2}, \eqref{ratesH1}, \eqref{ratesL2char} varies from $3/4$ to $1$ (Theorem \ref{thm:general_convergence1}).
In this case, it depends on the exponent $\alpha > 0$ satisfying $(\weight)^{-\alpha} \in L^1(\bdryout)$. We emphasize that this is a new observation in this research.

In our results, we only give upper bounds for $L^2$ norms of $u-u_\eps$ of the form \eqref{ratesL2}, \eqref{ratesH1}, \eqref{ratesL2char}. In other words, we only give lower bounds of convergence rates $r$. In section \ref{sec:Num_ex}, some numerical examples are presented in order to verify optimality of our convergence estimates.

\section{Preliminaries} \label{sec:pre}

In this section, we introduce well-posedness results on boundary value problems \eqref{BVP} and \eqref{MER} in suitable spaces. In addition, we will present some inequalities we will use in our discussion.

We define function spaces $H_{\adv, \pm}(\GO)$ by
\[
H_{\adv, \pm}(\GO) := \{ v \in L^2(\GO) \mid \adv \cdot \nabla v \in L^2(\GO), v|_{\GG_\pm} = 0 \},
\]
and introduce the norm $\| \cdot \|_{H_\adv(\GO)}$ defined by
\[
\| v \|_{H_\adv(\GO)}^2 := \| v \|_{L^2(\GO)}^2 + \| \adv \cdot \nabla v \|_{L^2(\GO)}^2.
\]
It is worth mentioning that the traces $v|_{\GG_\pm}$ for a function $v \in L^2(\GO)$ with $\adv \cdot \nabla v \in L^2(\GO)$ does not belong to $L^2(\GG_\pm; |\weight|)$ in general, where $L^2(\GG_\pm; \weight)$ is the function space equipped with the norm $\| \cdot \|_{L^2(\GG_\pm; \weight)}$ defined by
\[
\| u \|_{L^2(\GG_\pm; \weight)}^2 := \int_{\GG_\pm} u^2 \weight\,d\Gs_x.
\]
For example, see \cite{1970Ba}.

We introduce results on the well-posedness of the boundary value problem \eqref{BVP} in $H_{\adv, -}(\GO)$ and boundedness of trace operators. For these results, the density
\begin{equation} \label{density}
H_{\adv, -}(\GO) = \ol{\Lip_{\bdryin}(\ol{\GO})}^{\| \cdot \|_{H_\adv(\GO)}}
\end{equation}
is required, where $\Lip_{\GG_-}(\ol{\GO})$ is the function space consisting of Lipschitz functions on $\ol{\domain}$ which are 0 on $\bdryin$ and $\ol{\Lip_{\bdryin}(\ol{\GO})}^{\| \cdot \|_{H_\adv(\GO)}}$ denotes its closure with respect to the norm $\| \cdot \|_{H_\adv(\GO)}$. We remark that the a sufficient condition for the density \eqref{density} is given in \cite{IK}.

\begin{prop} \label{prop:well-posed0} 
Suppose that the density \eqref{density} holds. Then, the boundary value problem \eqref{BVP} has the unique solution $u \in H_{\adv, -}(\GO)$. Moreover, there exists a positive constant $C$ independent of $f$ such that
\[
\| u \|_{H_\adv (\GO)} \leq C \| f \|_{L^2(\GO)}.
\]
\end{prop}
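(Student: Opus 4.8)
The plan is to establish the three assertions of the proposition — uniqueness, the a priori bound, and existence — in that order, deriving uniqueness and the bound from a single energy identity and existence from a duality (inf--sup) argument. \emph{Step 1: the energy identity.} If $u\in H_{\adv,-}(\GO)$ solves \eqref{BVP}, then pairing the equation with $u$ and rewriting $\int_\GO(\adv\cdot\nabla u)u\,dx=\tfrac12\int_\GO\adv\cdot\nabla(u^2)\,dx$ through the Gauss--Green formula gives
\[
\int_\GO\Bigl(\reac-\tfrac12\div\adv\Bigr)u^2\,dx+\tfrac12\int_{\bdry}(\weight)\,u^2\,d\sigma=\int_\GO f\,u\,dx.
\]
The boundary integrand vanishes on $\GG_-$ (since $u|_{\GG_-}=0$) and on $\GG_0$ (since $\weight=0$) and is nonnegative on $\GG_+$, so \eqref{b0} and the Cauchy--Schwarz inequality give $\pos\norm{u}_{L^2(\GO)}\leq\norm{f}_{L^2(\GO)}$; substituting $\adv\cdot\nabla u=f-\reac u$ then bounds $\norm{\adv\cdot\nabla u}_{L^2(\GO)}$ by $(1+\pos^{-1}\norm{\reac}_{L^\infty(\GO)})\norm{f}_{L^2(\GO)}$, which yields the claimed estimate on $\norm{u}_{H_\adv(\GO)}$; taking $f=0$ gives uniqueness.

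\emph{Step 2: existence.} Consider the bounded operator $\Lcal\colon H_{\adv,-}(\GO)\to L^2(\GO)$, $\Lcal u:=\adv\cdot\nabla u+\reac u$. By Step 1 it is bounded below, hence injective with closed range, so it suffices to show $(\mathrm{Ran}\,\Lcal)^{\perp}=\{0\}$ in $L^2(\GO)$. If $g\in L^2(\GO)$ satisfies $\int_\GO(\Lcal u)g\,dx=0$ for all $u\in H_{\adv,-}(\GO)$, then testing against $u\in C_c^\infty(\GO)$ first shows $\adv\cdot\nabla g=(\reac-\div\adv)g$ in $\Dcal'(\GO)$, so $g\in H_\adv(\GO)$; feeding this back and using the generalized Green's formula shows that $g$ has vanishing trace on $\GG_+$. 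Pairing the identity $\adv\cdot\nabla g=(\reac-\div\adv)g$ with $g$ then gives $\tfrac12\int_{\bdry}(\weight)g^2\,d\sigma=\int_\GO(\reac-\tfrac12\div\adv)g^2\,dx\geq\pos\norm{g}_{L^2(\GO)}^2$, while the left-hand side is supported on $\GG_-$, where $\weight<0$, hence nonpositive; therefore $g=0$, $\Lcal$ is bijective, and $u:=\Lcal^{-1}f$ is the desired solution, with the norm bound already supplied by Step 1.

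\emph{Main obstacle and a variant.} The real work in both steps is a generalized Gauss--Green / trace theory for the graph space $H_\adv(\GO)$ over a Lipschitz domain: one must make rigorous sense of the traces $u|_{\GG_\pm}$ — which, as remarked after the definition of $H_{\adv,\pm}(\GO)$, need not lie in $L^2(\GG_\pm;|\weight|)$ for a generic graph-space function — and of the identity $\int_\GO(\adv\cdot\nabla v)w\,dx+\int_\GO v\,\div(\adv w)\,dx=\int_{\bdry}(\weight)\,vw\,d\sigma$. I would handle this by localizing and flattening $\bdry$, translating inward, and mollifying, controlling the resulting commutator in the spirit of DiPerna--Lions, and I would invoke the trace results stated together with this proposition (cf.\ \cite{1970Ba}). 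If one prefers to avoid the adjoint trace argument in Step 2, an alternative is to obtain $u$ as a weak $L^2$ limit of solutions $\solER$ to the regularized problem \eqref{MER}: for fixed $\eps>0$ the associated bilinear form is coercive on $\SPbc$ (again because the $\GG_+$ boundary term is nonnegative), so the Lax--Milgram theorem applies; the energy identity gives the $\eps$-uniform bounds $\norm{\solER}_{L^2(\GO)}\leq\pos^{-1}\norm{f}_{L^2(\GO)}$ and $\eps^{1/2}\norm{\nabla\solER}_{L^2(\GO)}\leq C\norm{f}_{L^2(\GO)}$; and passing to the limit in $\Dcal'(\GO)$ (note $\eps\GD\solER\to0$ in $H^{-1}(\GO)$) produces $u\in H_\adv(\GO)$ solving the equation in $L^2(\GO)$, with $u|_{\GG_-}=0$ read off again from trace theory.
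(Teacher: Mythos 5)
Your proposal is correct and follows essentially the same route as the paper: the paper verifies the two conditions of the Banach--Ne\v{c}as--Babu\v{s}ka theorem for the bilinear form $B(u,v)=\int_\GO(\adv\cdot\nabla u+\reac u)v\,dx$ on $H_{\adv,-}(\GO)\times L^2(\GO)$, which is exactly your ``bounded below, hence injective with closed range'' plus ``orthogonal complement of the range is trivial via the adjoint trace argument,'' and it handles the graph-space Green's formula and the local trace of the adjoint solution on $\GG_+$ by the same density and cutoff devices you flag as the main obstacle. (Your sign bookkeeping in the final adjoint energy identity, where the $\GG_-$ boundary term is nonpositive while the interior term is bounded below by $\pos\|g\|_{L^2(\GO)}^2$, is in fact cleaner than the paper's.)
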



\begin{prop} \label{prop:trace_adv} 
Suppose that the density \eqref{density} holds. Then, the trace operators $\Gg_\pm: H_{\adv, \mp}(\GO) \to L^2(\GG_\pm; |\weight|)$ are bounded.
\end{prop}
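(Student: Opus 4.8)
The plan is to obtain both bounds from Green's identity for the advection field $\adv$. For $v \in C^\infty(\ol\GO)$ one has the pointwise identity $\div(\adv\,v^2) = (\div\adv)\,v^2 + 2\,v\,(\adv\cdot\nabla v)$, so the divergence theorem yields
\[
\int_{\bdry}(\weight)\,v^2\,ds = \int_{\GO}(\div\adv)\,v^2\,dx + 2\int_{\GO}v\,(\adv\cdot\nabla v)\,dx .
\]
Suppose first that $v \in C^\infty(\ol\GO)$ vanishes on a neighborhood of $\GG_-$. Using $\weight=0$ on $\GG_0$, the fact that $\GG_e$ has zero surface measure, and $\weight=|\weight|$ on $\GG_+$, the boundary integral collapses to $\norm{v|_{\GG_+}}_{L^2(\GG_+;|\weight|)}^2$, and Young's inequality applied to the last term gives
\[
\norm{v|_{\GG_+}}_{L^2(\GG_+;|\weight|)}^2 \le \bigl(\norm{\div\adv}_{L^\infty(\GO)}+1\bigr)\,\norm{v}_{H_\adv(\GO)}^2.
\]
Symmetrically, if $v\in C^\infty(\ol\GO)$ vanishes near $\GG_+$, then $\weight=-|\weight|$ on $\GG_-$ forces $\norm{v|_{\GG_-}}_{L^2(\GG_-;|\weight|)}^2\le C\,\norm{v}_{H_\adv(\GO)}^2$; equivalently, the second estimate is the first applied to the field $-\adv$, for which $\GG_+$ and $\GG_-$ exchange roles.

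It remains to pass from smooth functions to a general element of $H_{\adv,-}(\GO)$, and here the linearity of the above estimate is crucial: if $v,v'\in C^\infty(\ol\GO)$ both vanish near $\GG_-$, then so does $v-v'$, whence $\norm{(v-v')|_{\GG_+}}_{L^2(\GG_+;|\weight|)}\le C\,\norm{v-v'}_{H_\adv(\GO)}$. Thus, once one establishes that every $u\in H_{\adv,-}(\GO)$ is the limit in $H_\adv(\GO)$ of a sequence $u_n\in C^\infty(\ol\GO)$ with $u_n=0$ near $\GG_-$ --- i.e.\ that $H_{\adv,-}(\GO)$ is the $H_\adv(\GO)$-closure of this class, which is the natural reading of the condition $u|_{\GG_-}=0$ --- the traces $u_n|_{\GG_+}$ form a Cauchy sequence in $L^2(\GG_+;|\weight|)$; its limit defines $\Gg_+ u$, and letting $n\to\infty$ in the displayed bound gives $\norm{\Gg_+ u}_{L^2(\GG_+;|\weight|)}\le C\,\norm{u}_{H_\adv(\GO)}$. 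The trace operator $\Gg_-$ on $H_{\adv,+}(\GO)$ is treated in the same way.

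The main obstacle is this density/approximation step rather than the elementary algebra above: on a merely Lipschitz domain, and with a possibly large characteristic set $\GG_0$, one must approximate $u$ \emph{and} $\adv\cdot\nabla u$ simultaneously in $L^2(\GO)$ by smooth functions whose supports avoid a neighborhood of $\GG_-$. I would carry this out by the standard transport-equation construction --- localize with a partition of unity subordinate to a finite bi-Lipschitz cover of $\bdry$, translate $u$ inward so as to push the support off $\GG_-$, and mollify at a finer scale --- controlling the commutator of $\adv\cdot\nabla$ with the mollification by a Friedrichs-type lemma, available since $\adv\in W^{1,\infty}$. Finally, the degeneracy $\weight\to 0$ at $\GG_0$ is precisely why only the weighted $L^2$ norm is obtained, no trace regularity being gained near $\GG_0$; this matches the remark after the definition of $H_{\adv,\pm}(\GO)$ that, absent the one-sided vanishing, the trace need not belong to $L^2(\GG_\pm;|\weight|)$. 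The whole argument is in the spirit of Bardos \cite{1970Ba}.
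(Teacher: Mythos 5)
Your proof is correct and follows essentially the same route as the paper: Green's identity $\|v\|_{L^2(\GG_+;\weight)}^2-\|v\|_{L^2(\GG_-;|\weight|)}^2=2\int_\GO v(\adv\cdot\nabla v)\,dx+\int_\GO(\div\adv)v^2\,dx$ for smooth $v$, the observation that the $\GG_-$ term drops when $v$ vanishes there, and a density/completion argument in $H_\adv(\GO)$. The only real difference is the density step: the paper works with $C^1(\ol\GO)$ functions that merely vanish on $\GG_-$ (not on a neighborhood of it) and cites the density of this class in $H_{\adv,-}(\GO)$ from Jensen's thesis \cite{J} (Proposition \ref{prop:density}), whereas you insist on the smaller class of functions supported away from $\GG_-$ and sketch the translation--mollification--Friedrichs-commutator construction yourself.
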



From Proposition \ref{prop:trace_adv}, there exists a positive constant $C$ such that
\begin{equation} \label{bound_trace+}
\| u \|_{L^2(\GO)} + \| u \|_{L^2(\GG_+; \weight)} \leq C \| u \|_{H_\adv(\GO)}
\end{equation}
for all $u \in H_{\adv, -}(\GO)$.

Next we will show the well-posedness of the problem \eqref{MER}. We give a proof here because it contains a key argument in the following sections.

\begin{prop} \label{prop:WP_MER} 
For all $\eps > 0$, the boundary value problem \eqref{MER} has the unique solution $u_\eps \in H^1_{\GG_-}(\GO)$ in the sense that
\begin{equation} \label{eq:weakMER}
\eps \int_\GO \nabla u_\eps \cdot \nabla v\,dx + \int_\GO (\adv \cdot \nabla u_\eps + \reac u_\eps) v \,dx = \int_\GO f v\,dx
\end{equation}
for all $v \in H^1_{\GG_-}(\GO)$.
\end{prop}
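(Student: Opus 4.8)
The plan is to verify the hypotheses of the Lax--Milgram theorem for the bilinear form
\[
a_\eps(u,v) := \eps \int_\GO \nabla u \cdot \nabla v\,dx + \int_\GO (\adv \cdot \nabla u)\, v\,dx + \int_\GO \reac\, u v\,dx
\]
on the Hilbert space $H^1_{\GG_-}(\GO)$, which is a closed subspace of $H^1(\GO)$, being the kernel of the continuous map $u \mapsto (u|_{\p\GO})|_{\GG_-}$, together with the linear functional $v \mapsto \int_\GO f v\,dx$, which is bounded on $H^1_{\GG_-}(\GO)$ since $f \in L^2(\GO)$. Boundedness of $a_\eps$ follows from the Cauchy--Schwarz inequality and from $\adv \in W^{1,\infty}(\GO)^d$, $\reac \in L^\infty(\GO)$, with a constant depending on $\eps$. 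Note that only the essential condition $u|_{\GG_-} = 0$ is encoded in the space; the homogeneous Neumann condition on $\GG_+ \union \GG_0$ is natural and is recovered from \eqref{eq:weakMER} a posteriori by testing against functions supported away from $\GG_-$.

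The crux is coercivity. Taking $v = u$ and using $(\adv \cdot \nabla u)\, u = \frac{1}{2}\,\adv \cdot \nabla(u^2)$, I would integrate by parts: since $u \in H^1(\GO)$ we have $u^2 \in W^{1,1}(\GO)$ with $\nabla(u^2) = 2u\nabla u$, so $\adv\, u^2 \in W^{1,1}(\GO)^d$ and the divergence theorem on the Lipschitz domain $\GO$ gives
\[
\int_\GO \adv \cdot \nabla(u^2)\,dx = \int_{\p\GO} (\weight)\,\bigl(u|_{\p\GO}\bigr)^2\,dS - \int_\GO (\div \adv)\, u^2\,dx ,
\]
the surface integral being finite because $u|_{\p\GO} \in L^2(\p\GO)$ by the trace theorem. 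Since $u$ vanishes on $\GG_-$, since $\weight = 0$ on $\GG_0$, and since $\GG_e$ has zero surface measure, the boundary term reduces to $\int_{\GG_+}(\weight)\,u^2\,dS \geq 0$. Hence
\[
a_\eps(u,u) = \eps\,\| \nabla u \|_{L^2(\GO)}^2 + \frac{1}{2}\int_{\GG_+}(\weight)\,u^2\,dS + \int_\GO \Bigl( \reac - \frac{1}{2}\div \adv \Bigr) u^2\,dx ,
\]
and by \eqref{b0} this is bounded below by $\eps\,\| \nabla u \|_{L^2(\GO)}^2 + \pos\,\| u \|_{L^2(\GO)}^2 \geq \min\{\eps, \pos\}\,\| u \|_{H^1(\GO)}^2$. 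Lax--Milgram then yields a unique $u_\eps \in H^1_{\GG_-}(\GO)$ satisfying \eqref{eq:weakMER}.

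I do not expect a genuine obstacle: the only delicate point is to justify the integration by parts that produces the sign-definite boundary term $\int_{\GG_+}(\weight)\,u^2\,dS$ at the $H^1$ regularity level on a merely Lipschitz domain, and to check that the contributions from $\GG_-$, $\GG_0$ and $\GG_e$ really drop out. Once this is in hand, coercivity is immediate from the structural hypothesis \eqref{b0}, and the remainder is the routine Lax--Milgram argument. It is worth emphasizing that this energy identity --- the positive interior bound $\pos$ together with the nonnegative outflow term on $\GG_+$ --- is precisely the estimate that will be used repeatedly in the convergence analysis of the subsequent sections.
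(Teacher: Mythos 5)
Your proposal is correct and follows essentially the same route as the paper: the Lax--Milgram theorem on $H^1_{\GG_-}(\GO)$, with coercivity obtained by integrating $(\adv\cdot\nabla u)\,u$ by parts so that the boundary contribution reduces to the nonnegative term $\frac{1}{2}\int_{\GG_+} u^2\,\weight\,d\Gs_x$ and the interior term is controlled by the structural hypothesis \eqref{b0}, yielding the constant $\min\{\eps,\pos\}$. Your extra care in justifying the divergence theorem at the $H^1$ level on a Lipschitz domain, and your remark that the Neumann condition is natural, are consistent with (though more explicit than) the paper's argument.
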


\begin{proof}
We define a bilinear form $B_\eps$ on $H^1_{\GG_-}(\GO)^2$ by
\[
B_\eps(u, v) := \eps \int_\GO \nabla u \cdot \nabla v\,dx + \int_\GO (\adv \cdot \nabla u + \reac u) v\,dx
\]
for all $u, v \in H^1_{\GG_-}(\GO)$. It is trivial that the bilinear form $B_\eps$ is continuous on $H^1_{\GG_-}(\GO)^2$. In what follows, we investigate its coercivity on $H^1_{\GG_-}(\GO)^2$.

By integration by parts, we have
\[
\int_\GO (\adv \cdot \nabla u) u\,dx = \int_{\GG_+} u^2 \weight\,d\Gs_x - \int_\GO (\div \adv) u^2\,dx - \int_\GO u (\adv \cdot \nabla u)\,dx
\]
and hence
\[
\int_\GO (\adv \cdot \nabla u) u\,dx = -\frac{1}{2} \int_\GO (\div \adv) u^2\,dx + \frac{1}{2} \int_{\GG_+} u^2 \weight\,d\Gs_x
\]
for all $u \in H^1_{\GG_-}(\GO)$. Thus, we have
\begin{align}
  \begin{split}
    B_\eps(u, u) =& \eps \int_\GO \nabla u \cdot \nabla u\,dx + \int_\GO (\reac - \frac{1}{2} \div \adv) u^2\,dx + \frac{1}{2} \int_{\GG_+} u^2 \weight\,d\Gs_x\\
    \geq& \min \{ \eps, \pos \} \| u \|_{H^1(\GO)}^2 \label{ineq:bilinear}
  \end{split}
\end{align}
for all $u \in H^1_{\GG_-}(\GO)$, which is the coercivity of the bilinear form $B_\eps$ on $u \in H^1_{\GG_-}(\GO)^2$.

The conclusion follows from the Lax-Milgram theorem.
\end{proof}

At the end of this section, we introduce two inequalities for the trace of $H^1$ functions.

\begin{prop} \label{prop:trace_Lp}
Let $\GO$ be a bounded domain of $\Rbb^d$ with Lipschitz boundary $\p \GO$. Then, there exists a constant $C$ such that
\begin{equation} \label{est:trace_L2}
\| u \|_{L^2(\p \GO)} \leq C \left( \eps' \| \nabla u \|_{L^2(\GO)} + \eps'^{-1} \| u \|_{L^2(\GO)} \right)
\end{equation}
for all $u \in H^1(\GO)$ and $0 < \eps' < 1$.
\end{prop}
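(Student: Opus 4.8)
The plan is to obtain \eqref{est:trace_L2} from the divergence theorem applied to the vector field $u^2 F$ for a well-chosen $F$, and then to create the parameter $\eps'$ by splitting a cross term with Young's inequality. First I would recall that, since $\domain$ is a bounded Lipschitz domain, there exist a Lipschitz vector field $F\colon \ol{\domain}\to\Euclid{d}$ and a constant $c>0$ with $F\cdot\normal \geq c$ for $\Hcal^{d-1}$-a.e.\ point of $\bdry$; such an $F$ can be assembled by flattening $\bdry$ on its finitely many Lipschitz-graph patches, taking the constant "upward" field on each patch, multiplying by a subordinate partition of unity, and summing. Because $F$ is Lipschitz we have $\div F\in L^\infty(\domain)$, so for $u\in C^\infty(\ol{\domain})$ the divergence theorem gives
\[
  c\int_{\bdry} u^2\,d\Gs_x \;\leq\; \int_{\bdry} u^2\,(F\cdot\normal)\,d\Gs_x \;=\; \int_\domain\bigl(u^2\,\div F + 2u\,F\cdot\nabla u\bigr)\,dx .
\]

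Next, bounding $\div F$ and $F$ by their $L^\infty$-norms and applying the Cauchy--Schwarz inequality to the last integral yields
\[
  c\,\norm{u}_{L^2(\bdry)}^2 \;\leq\; \norm{\div F}_{L^\infty(\domain)}\norm{u}_{L^2(\domain)}^2 + 2\norm{F}_{L^\infty(\domain)}\norm{u}_{L^2(\domain)}\norm{\nabla u}_{L^2(\domain)} .
\]
Now I would insert $\eps'$ via Young's inequality, $2\norm{u}_{L^2(\domain)}\norm{\nabla u}_{L^2(\domain)} \leq \eps'^{2}\norm{\nabla u}_{L^2(\domain)}^2 + \eps'^{-2}\norm{u}_{L^2(\domain)}^2$, and absorb the first term using $0<\eps'<1$, i.e.\ $\norm{\div F}_{L^\infty(\domain)}\leq\norm{\div F}_{L^\infty(\domain)}\eps'^{-2}$. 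Collecting terms gives $\norm{u}_{L^2(\bdry)}^2 \leq C\bigl(\eps'^{2}\norm{\nabla u}_{L^2(\domain)}^2 + \eps'^{-2}\norm{u}_{L^2(\domain)}^2\bigr)$ with $C$ depending only on $c$, $\norm{F}_{L^\infty(\domain)}$, $\norm{\div F}_{L^\infty(\domain)}$, hence on $\domain$ alone; taking square roots and using $\sqrt{a+b}\leq\sqrt a+\sqrt b$ produces \eqref{est:trace_L2} for smooth $u$. Finally I would extend the inequality to all $u\in H^1(\domain)$ by density of $C^\infty(\ol{\domain})$ in $H^1(\domain)$, which holds for Lipschitz domains, using that both sides are continuous in the $H^1(\domain)$-norm (for the left side, via the classical unscaled trace theorem).

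The only ingredient that is not completely elementary is the existence of the vector field $F$ with $F\cdot\normal$ bounded below on $\bdry$; the rest is the divergence theorem, Cauchy--Schwarz, and Young's inequality. If one prefers to avoid constructing $F$, an equivalent route is to localize with a partition of unity $\{\chi_j\}$, straighten each Lipschitz graph patch, and estimate $(\chi_j u)^2$ on the boundary by integrating $\partial_{x_d}(\chi_j u)^2$ in the transversal direction through the fundamental theorem of calculus; this gives $\norm{u}_{L^2(\bdry)}^2 \lesssim \norm{u}_{L^2(\domain)}\bigl(\norm{u}_{L^2(\domain)}+\norm{\nabla u}_{L^2(\domain)}\bigr)$, after which the same Young's inequality step completes the proof.
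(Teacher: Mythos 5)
Your proof is correct. The paper itself gives no argument for this proposition --- it simply refers the reader to Grisvard's book for the general $L^p$ case --- and the argument you supply is precisely the standard one found there: produce a Lipschitz vector field $F$ transversal to the boundary with $F\cdot n \geq c > 0$ almost everywhere on $\p\GO$ (constant fields on graph patches glued by a partition of unity), apply the divergence theorem to $u^2F$, estimate the cross term by Cauchy--Schwarz, and generate the parameter $\eps'$ by Young's inequality, absorbing the zeroth-order term using $1 \leq \eps'^{-2}$. All steps check out: the constant depends only on $c$, $\| F \|_{L^\infty}$ and $\| \div F \|_{L^\infty}$, hence on $\GO$ alone and not on $u$ or $\eps'$, and the extension from $C^\infty(\ol{\GO})$ to $H^1(\GO)$ by density is legitimate since both sides of the inequality are continuous in the $H^1$ norm (the left side by the classical trace theorem on Lipschitz domains). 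Your alternative sketch via flattening and the fundamental theorem of calculus in the transversal variable is equally valid and is essentially the same computation carried out chart by chart.
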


We can find a proof in \cite{G} for the general $L^p$ case.

\begin{prop} \label{prop:trace_embedding}
Let $\GO$ be a bounded domain in $\Rbb^d$ with Lipschitz boundary, and suppose that $d > 2$ and $2 \leq q \leq 2^* := 2(d - 1)/(d - 2)$. Then, the trace operator $\Gg_{0, q}: H^1(\GO) \to L^q(\p \GO)$ is bounded. If $d = 2$, then the trace operator $\Gg_{0, q}: H^1(\GO) \to L^q(\p \GO)$ is bounded for $p \leq q < \infty$.
\end{prop}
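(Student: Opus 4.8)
The plan is to reduce to a half-space by the standard localization--flattening procedure, to prove the inequality there by an elementary one-variable estimate together with the Sobolev embedding, and then to sum the local pieces. (Alternatively one could compose the trace theorem $H^1(\GO) \to H^{1/2}(\p\GO)$ with a fractional Sobolev embedding on the $(d-1)$-dimensional boundary, but the direct route avoids setting up fractional spaces on a merely Lipschitz manifold.) The only numerology to track is that the hypothesis $2 \le q \le 2^* = 2(d-1)/(d-2)$ is \emph{exactly} equivalent to $2 \le 2(q-1) \le 2d/(d-2)$, that is, to $2(q-1)$ lying in the admissible exponent range of the Sobolev embedding $H^1 \hookrightarrow L^{2(q-1)}$ in dimension $d$. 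Throughout, $C$ denotes a constant that may change from line to line and depends only on $\GO$, $d$, $q$.

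First, I would establish the model inequality: for every $v \in C_c^\infty(\overline{\Rbb^d_+})$,
$$\| v(\cdot, 0) \|_{L^q(\Rbb^{d-1})} \le C \| v \|_{H^1(\Rbb^d_+)} .$$
Since $v$ has compact support, the fundamental theorem of calculus and the chain-rule bound $\big| \p_t\big( | v |^q \big) \big| \le q | v |^{q-1} | \nabla v |$ give, for each $x' \in \Rbb^{d-1}$,
$$| v(x', 0) |^q = -\int_0^\infty \p_t\big( | v(x', t) |^q \big)\, dt \le q \int_0^\infty | v(x', t) |^{q-1}\, | \nabla v(x', t) |\, dt .$$
Integrating in $x'$ and applying the Cauchy--Schwarz inequality,
$$\| v(\cdot, 0) \|_{L^q(\Rbb^{d-1})}^q \le q \int_{\Rbb^d_+} | v |^{q-1}\, | \nabla v |\, dx \le q\, \| v \|_{L^{2(q-1)}(\Rbb^d_+)}^{q-1}\, \| \nabla v \|_{L^2(\Rbb^d_+)} .$$
Because $2(q-1)$ lies in the admissible Sobolev range, $\| v \|_{L^{2(q-1)}(\Rbb^d_+)} \le C \| v \|_{H^1(\Rbb^d_+)}$, and taking $q$-th roots yields the model inequality. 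By density of $C_c^\infty(\overline{\Rbb^d_+})$ in $H^1(\Rbb^d_+)$ it extends to all $v \in H^1(\Rbb^d_+)$, with $v(\cdot,0)$ the trace.

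To pass from the half-space to $\GO$, cover the compact Lipschitz boundary $\p\GO$ by finitely many open sets $U_1, \dots, U_N$ in each of which, after a rigid motion, $\GO \cap U_i = \{ x_d > \varphi_i(x') \} \cap U_i$ with $\varphi_i$ Lipschitz and $\p\GO \cap U_i$ the corresponding graph, and take a partition of unity $\{ \psi_i \}_{i=1}^N$ subordinate to $\{ U_i \}$ (together with an interior cutoff, which contributes nothing to the boundary trace). The map $\Phi_i(x', x_d) = (x', x_d - \varphi_i(x'))$ is bi-Lipschitz, sends $\GO \cap U_i$ into $\Rbb^d_+$ and $\p\GO \cap U_i$ into $\{ x_d = 0 \}$; composition with $\Phi_i^{-1}$ is bounded on $H^1$ (a Lipschitz change of variables, using only $\nabla \varphi_i \in L^\infty$), and the surface measure on $\p\GO \cap U_i$ pushes forward to $\sqrt{1 + | \nabla \varphi_i |^2}\, dx'$, which is comparable to Lebesgue measure on $\Rbb^{d-1}$. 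Applying the model inequality to each $(\psi_i u) \circ \Phi_i^{-1} \in H^1(\Rbb^d_+)$, transporting back by $\Phi_i$, and summing over $i$ gives $\| u \|_{L^q(\p\GO)} \le C \| u \|_{H^1(\GO)}$ for all $u \in H^1(\GO)$. For $d = 2$ the argument is identical except that $H^1(\Rbb^2) \hookrightarrow L^r(\Rbb^2)$ for every finite $r \ge 2$, so $2(q-1)$ may be taken arbitrarily large and the estimate holds for every $q \in [2, \infty)$.

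The analytically substantive ingredients --- the Sobolev embedding and the density of smooth functions in $H^1$ --- are classical, so the real work is the bookkeeping in the globalization step: checking that the bi-Lipschitz flattenings preserve $H^1$ norms and distort surface measure only by bounded factors, and that the partition of unity introduces no loss. The feature that keeps the result valid for a merely Lipschitz (rather than $C^1$) boundary is precisely that the model computation differentiates $\varphi_i$ at most once, so boundedness of $\nabla \varphi_i$ is all that is needed.
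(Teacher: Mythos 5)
Your proposal is correct, but note that the paper does not prove this proposition at all: it simply labels it the Sobolev trace embedding theorem and cites Adams--Fournier for smoother boundaries and Leoni for the Lipschitz case. What you have written is essentially the classical argument that those references contain: the half-space model estimate obtained by writing $|v(x',0)|^q$ as $-\int_0^\infty \p_t\bigl(|v(x',t)|^q\bigr)\,dt$, bounding it by $q\|v\|_{L^{2(q-1)}}^{q-1}\|\nabla v\|_{L^2}$ via Cauchy--Schwarz, and closing with the Sobolev embedding $H^1\hookrightarrow L^{2(q-1)}$ (your exponent bookkeeping $2\le 2(q-1)\le 2d/(d-2)\iff 2\le q\le 2(d-1)/(d-2)$ is right, and the $d=2$ case follows as you say since every finite exponent is admissible); then bi-Lipschitz flattening, partition of unity, and comparability of the surface measure with $\sqrt{1+|\nabla\varphi_i|^2}\,dx'$. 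Two small points worth making explicit if this were written out in full: after multiplying by $\psi_i$ and flattening, you should say why the zero-extension of $(\psi_i u)\circ\Phi_i^{-1}$ from $\Phi_i(\GO\cap U_i)$ to all of $\Rbb^d_+$ lies in $H^1(\Rbb^d_+)$ (it does, because $\psi_i$ vanishes near $\p U_i$, so the extension is by zero away from the support); and the "$p\le q<\infty$" in the statement for $d=2$ is evidently a typo for $2\le q<\infty$, which is the range your argument delivers. So your route is sound and self-contained, whereas the paper's "proof" is a citation; the trade-off is only length versus reliance on the literature.
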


Proposition \ref{prop:trace_embedding} is called the Sobolev trace embedding theorem. We can find its proof in \cite{AF} for the Sobolev trace embedding theorem for general $W^{m, p}$ functions assuming more regularity on the boundary $\p \GO$. We remark that their proof is obviously modified to our cases with Lipschitz boundaries. For example, see \cite{Le}.

\section{$L^2$ strong convergence to $H_{\adv, -}$ solutions} \label{sec:Ha}

In this section, we show strong convergence of the solution $\solER$ to the solution $u$ admitting Theorem \ref{thm:1/2convergence} in Section $\ref{sec:H1}$.

\begin{theorem} \label{thm:strong_convergence}
Suppose that the density \eqref{density} holds. Let $u$ be the solution to the problem \eqref{BVP} in $H_{\adv, -}(\GO)$ and let $u_\Ge$ be the solution to the problem \eqref{MER} in $H^1_{\GG_-}(\GO)$. Then, the sequence $\{ (u_\Ge, u_\Ge|_{\GG_+}) \}$ converges to $(u, u|_{\GG_+})$ strongly in $L^2(\GO) \times L^2(\GG_+; \weight)$ as $\Ge \downarrow 0$.
\end{theorem}

\begin{proof}
Let $u \in H_{\adv, -}(\GO)$ be the unique solution to the problem \eqref{BVP}. Thanks to the density \eqref{density} and the inclusion $\Lip_{\bdryin}(\ol{\domain}) \subset H^1_{\GG_-}(\GO) \subset H_{\adv, -}(\GO)$, $H^1_{\GG_-}(\GO)$ is also dense in $H_{\adv, -}(\GO)$. Thus, for any $\Gd > 0$, there exists a function $u^\Gd \in H^1_{\GG_-}(\GO)$ such that
\begin{equation} \label{approxH1}
\| u - u^\Gd \|_{H_\adv(\GO)} < \Gd.
\end{equation}
We fix $\Gd > 0$ and take $u^\Gd \in H^1_{\GG_-}(\GO)$. Also, we define the function $f^\Gd$ by
\[
f^\Gd := \adv \cdot \nabla u^\Gd + \reac u^\Gd.
\]
Since
\begin{align*}
\| f^\Gd \|_{L^2(\GO)} \leq& \max \{ 1, \| \reac \|_{L^\infty(\GO)} \} \| u^\Gd \|_{H_\adv(\GO)}\\
\leq& \max \{ 1, \| \adv \|_{W^{1, \infty}(\GO)^d} \} \max \{ 1, \| \reac \|_{L^\infty(\GO)} \} \| u^\Gd \|_{H^1(\GO)},
\end{align*}
the function $f^\Gd$ belongs to $L^2(\GO)$. Thus, by Proposition \ref{prop:well-posed0}, the function $u^\Gd$ is the unique solution to the following boundary value problem in $H_{\adv, -}(\GO)$:
  \begin{equation} \label{advection_delta}
  \begin{cases}
  \adv \cdot \nabla u + \reac u = f^\Gd &\mbox{ in } \GO,\\
  u =  0 &\mbox{ on } \GG_-.
  \end{cases}
  \end{equation}
  Then, we have
  \begin{empheq}[left=\empheqlbrace]{alignat*=2}
    &\dd (u-u^\delta) + \reac (u-u^\delta) = f-f^\delta &&\text{ in } \domain, \\
    &u-u^\delta=0 &&\text{ on } \bdryin.
  \end{empheq}
  Thus, we have
  \begin{align} \label{bound_f}
  \| f - f^\Gd \|_{L^2(\GO)} \leq& \| \adv \cdot \nabla (u - u^\Gd) \|_{L^2(\GO)} + \| \reac (u - u^\Gd) \|_{L^2(\GO)} \nonumber\\
  \leq& \max \{ 1, \| \reac \|_{L^\infty(\GO)} \} \| u - u^\Gd \|_{H_\adv(\GO)}.
  \end{align}

  Corresponding to the boundary value problem \eqref{advection_delta}, let us consider the following boundary value problem:
  \begin{equation} \label{ellip_reg_delta}
  \begin{cases}
  -\Ge \GD u_\Ge^\Gd + \adv \cdot \nabla u_\Ge^\Gd + \reac u_\Ge^\Gd = f^\Gd &\mbox{ in } \GO,\\
  u_\Ge^\Gd = 0 &\mbox{ on } \GG_-,\\
  \dfrac{\p u_\Ge^\Gd}{\p n} = 0 &\mbox{ on } \p \GO \setminus \GG_-.
  \end{cases}
  \end{equation}
  Let $u_\Ge$ and $u_\Ge^\Gd$ be solutions to boundary value problems \eqref{MER} and \eqref{ellip_reg_delta}, respectively. Then, we have
\[
\Ge \int_\GO \nabla (u_\Ge - u_\Ge^\Gd) \cdot \nabla v\,dx + \int_\GO (\adv \cdot \nabla (u_\Ge - u_\Ge^\Gd) + \reac (u_\Ge - u_\Ge^\Gd)) v\,dx = \int_\GO (f - f^\Gd) v\,dx
\]
for all $v \in H^1_{\GG_-}(\GO)$. Letting $v = u_\Ge - u_\Ge^\Gd \in H^1_{\GG_-}(\GO)$, we have
\begin{align*}
&\reac_0 \| u_\Ge - u_\Ge^\Gd \|_{L^2(\GO)}^2 + \frac{1}{2} \| u_\Ge - u_\Ge^\Gd \|_{L^2(\GG_+; \weight)}^2\\
\leq& \Ge \| \nabla (u_\Ge - u_\Ge^\Gd) \|_{L^2(\GO)}^2 + \int_\GO (\reac - \frac{1}{2} \div \adv) (u_\Ge - u_\Ge^\Gd)^2\,dx + \frac{1}{2} \| u_\Ge - u_\Ge^\Gd \|_{L^2(\GG_+; \weight)}^2\\
=& \int_\GO (f - f^\Gd) (u_\Ge - u_\Ge^\Gd)\,dx\\
\leq& \| f - f^\Gd \|_{L^2(\GO)} \| u_\Ge - u_\Ge^\Gd \|_{L^2(\GO)}\\
\leq& \frac{1}{2} \reac_0^{-1} \| f - f^\Gd \|_{L^2(\GO)}^2 + \frac{1}{2} \reac_0 \| u_\Ge - u_\Ge^\Gd \|_{L^2(\GO)}^2.
\end{align*}
or
\[
\pos \| u_\Ge - u_\Ge^\Gd \|_{L^2(\GO)}^2 + \| u_\Ge - u_\Ge^\Gd \|_{L^2(\GG_+; \weight)}^2 \leq \pos^{-1} \| f - f^\Gd \|_{L^2(\GO)}^2.
\]
In other words, there exists a positive constant $C$ such that
\begin{equation} \label{est_eps_delta}
\| u_\Ge - u_\Ge^\Gd \|_{L^2(\GO)} + \| u_\Ge - u_\Ge^\Gd \|_{L^2(\GG_+; \weight)} \leq C \| f - f^\Gd \|_{L^2(\GO)}.
\end{equation}

Using inequalities \eqref{bound_trace+} and \eqref{est_eps_delta}, and applying Theorem \ref{thm:1/2convergence}, we have
  \begin{align*}
  &\| u - u_\Ge \|_{L^2(\GO)} + \| u - u_\Ge \|_{L^2(\GG_+; \weight)}\\
  \leq& \| u - u^\Gd \|_{L^2(\GO)} + \| u - u^\Gd \|_{L^2(\GG_+; \weight)} + \| u^\Gd - u_\Ge^\Gd \|_{L^2(\GO)} + \| u^\Gd - u_\Ge^\Gd \|_{L^2(\GG_+; \weight)}\\
  &+ \| u_\Ge^\Gd - u_\Ge \|_{L^2(\GO)} + \| u_\Ge^\Gd - u_\Ge \|_{L^2(\GG_+; \weight)}\\
  \leq& C (\| u - u^\Gd \|_{H_\adv(\GO)} + \Ge^{\frac{1}{2}} \| \nabla u^\Gd \|_{L^2(\GO)} + \| f - f^\Gd \|_{L^2(\GO)} ).
\end{align*}
Here, we take the limit sup as $\Ge \downarrow 0$ to obtain
\[
\limsup_{\Ge \downarrow 0} \left( \| u - u_\Ge \|_{L^2(\GO)} + \| u - u_\Ge \|_{L^2(\GG_+; \weight)} \right) \leq C (\| u - u^\Gd \|_{H_\adv(\GO)} + \| f - f^\Gd \|_{L^2(\GO)} ) \leq C \Gd.
\]
Since the constant $\Gd > 0$ is arbitrary, we have
\[
\lim_{\Ge \downarrow 0} \left( \| u - u_\Ge \|_{L^2(\GO)} + \| u - u_\Ge \|_{L^2(\GG_+; \weight)} \right) = 0,
\]
which implies the strong convergence stated in Theorem \ref{thm:strong_convergence}. This completes the proof.
\end{proof}

\section{Convergence rate to $H^1$ solutions} \label{sec:H1}

In this section, we give estimates \eqref{ratesL2}, \eqref{ratesH1}, \eqref{ratesL2char} with $r=1/2$ assuming that the solution $u$ to the problem \eqref{BVP} belongs to $H^1_{\GG_-}(\GO)$. We rephrase the estimate as follows.

\begin{theorem} \label{thm:1/2convergence} 
Suppose that the solution $u$ to the problem \eqref{BVP} belongs to $H^1_{\GG_-}(\GO)$. Then, there exists a constant $C$ independent of $u$ and $\Ge$ such that
\[
\| u - u_\Ge \|_{L^2(\GO)} + \| u - u_\Ge \|_{L^2(\GG_+; \weight)} \leq C \| \nabla u \|_{L^2(\GO)} \Ge^{\frac{1}{2}}
\]
for all $\Ge > 0$, where $u_\Ge$ is the solution to the problem \eqref{MER}.
\end{theorem}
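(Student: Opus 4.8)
The plan is to obtain a Galerkin-type orthogonality identity for the error $w := u - \solER$ and then run an energy estimate based on the coercivity of the bilinear form established in the proof of Proposition~\ref{prop:WP_MER}.

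First I would write down the weak form of \eqref{BVP}. By hypothesis $u \in \SPbc \subset H_{\adv,-}(\domain)$, so the pointwise equation $\dd u + \reac u = \source$ holds in $L^2(\domain)$; testing it against an arbitrary $v \in \SPbc$ gives $\int_\domain (\dd u + \reac u) v\,dx = \int_\domain \source v\,dx$. Subtracting this identity from the weak formulation \eqref{eq:weakMER} of \eqref{MER}, and then adding $\Ge \int_\domain \nabla w \cdot \nabla v\,dx$ to both sides, yields
\[
B_\Ge(w, v) = \Ge \int_\domain \nabla u \cdot \nabla v \, dx \qquad \text{for all } v \in \SPbc,
\]
where I have used $\nabla u = \nabla w + \nabla \solER$. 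Note that $w \in \SPbc$, since both $u$ and $\solER$ vanish on $\bdryin$, so this identity may legitimately be tested at $v = w$.

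Next I would take $v = w$ and invoke the computation from the proof of Proposition~\ref{prop:WP_MER} (integration by parts in the advection term, valid for $\SPbc$-functions, together with \eqref{b0}) to get
\[
\Ge \| \nabla w \|_{L^2(\domain)}^2 + \pos \| w \|_{L^2(\domain)}^2 + \tfrac12 \| w \|_{L^2(\bdryout; \weight)}^2 \leq B_\Ge(w, w) = \Ge \int_\domain \nabla u \cdot \nabla w \, dx.
\]
Applying Cauchy--Schwarz and Young's inequality to the right-hand side, $\Ge \int_\domain \nabla u \cdot \nabla w\,dx \leq \tfrac{\Ge}{2}\| \nabla u \|_{L^2(\domain)}^2 + \tfrac{\Ge}{2}\| \nabla w \|_{L^2(\domain)}^2$, and absorbing the term $\tfrac{\Ge}{2}\| \nabla w \|_{L^2(\domain)}^2$ into the left-hand side leaves
\[
\pos \| w \|_{L^2(\domain)}^2 + \tfrac12 \| w \|_{L^2(\bdryout; \weight)}^2 \leq \tfrac{\Ge}{2} \| \nabla u \|_{L^2(\domain)}^2,
\]
from which the claimed bound follows by taking square roots, with a constant $C$ depending only on $\pos$.

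I do not expect a deep obstacle here; this is a standard consistency-plus-coercivity argument. The two points that deserve care are: (i) the justification that $u$ may be tested against all of $\SPbc$, which is immediate because $\dd u + \reac u = \source$ holds in $L^2(\domain)$, so no boundary condition on $u$ along $\bdryout \union \bdrychar$ is used; and (ii) the sign of the boundary term, namely that $\tfrac12\| w \|_{L^2(\bdryout; \weight)}^2$ is nonnegative precisely because $\weight > 0$ on $\bdryout$, so it contributes favorably to the left-hand side rather than requiring separate control. Conceptually, the artificial viscosity $-\Ge\GD$ is simply absent from the equation solved by $u$, so the consistency error is exactly $\Ge \int_\domain \nabla u \cdot \nabla v\,dx = O(\Ge)$ in the dual norm of $\SPbc$, and the coercive estimate converts this into the $O(\Ge^{1/2})$ rate in $L^2(\domain)$ and in $L^2(\bdryout;\weight)$.
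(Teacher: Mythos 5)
Your argument is correct and is essentially the paper's own proof: the same error identity $B_\Ge(w,v)=\Ge\int_\GO\nabla u\cdot\nabla v\,dx$, the same choice $v=w$, and the same coercivity estimate from Proposition~\ref{prop:WP_MER}. The only (immaterial) difference is that you close the estimate with Young's inequality and absorption, whereas the paper first deduces $\|\nabla w_\Ge\|_{L^2(\GO)}\le\|\nabla u\|_{L^2(\GO)}$ and then substitutes it back; both yield the same $\Ge^{1/2}$ rate with a constant depending only on $\pos$.
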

As corollaries we obtain the following convergence results.

\begin{cor} \label{cor:H1_strong}
Under the assumption of Theorem \ref{thm:1/2convergence}, the sequence of solutions $\{ u_\Ge \}$ converges to $u$ strongly in $H^1_{\GG_-}(\GO)$ as $\Ge \downarrow 0$.
\end{cor}

\begin{cor} \label{cor:1/4convergence_trace}
Suppose that the surface measure of the characteristic boundary $\GG_0$ is positive. Then, under the assumption of Theorem \ref{thm:1/2convergence}, there exists a constant $C$ independent of $u$ and $\Ge$ such that
\[
\| u - u_\Ge \|_{L^2(\GG_0)} \leq C \| \nabla u \|_{L^2(\GO)} \Ge^{\frac{1}{4}}
\]
for all $0 < \Ge < 1$.
\end{cor}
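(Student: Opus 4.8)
The plan is to apply the trace inequality \eqref{est:trace_L2} of Proposition \ref{prop:trace_Lp} to the difference $w:=u-u_\Ge$ and then optimize in the free parameter $\Ge'$. Under the hypothesis both $u$ and $u_\Ge$ belong to $H^1_{\GG_-}(\GO)\subset H^1(\GO)$, so $w\in H^1(\GO)$, and since $\GG_0\subset\p\GO$, Proposition \ref{prop:trace_Lp} yields, for every $0<\Ge'<1$,
\[
\|u-u_\Ge\|_{L^2(\GG_0)}\le\|w\|_{L^2(\p\GO)}\le C\bigl(\Ge'\,\|\nabla w\|_{L^2(\GO)}+\Ge'^{-1}\|w\|_{L^2(\GO)}\bigr).
\]

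To control the right-hand side I need two bounds on $w$, both of which come out of (the proof of) Theorem \ref{thm:1/2convergence}: the $L^2$ estimate $\|w\|_{L^2(\GO)}\le C\|\nabla u\|_{L^2(\GO)}\Ge^{1/2}$ stated there, and the uniform $H^1$ estimate $\|\nabla w\|_{L^2(\GO)}\le C\|\nabla u\|_{L^2(\GO)}$, which is \eqref{ratesH1} with $r=1/2$. For completeness I would recall why the latter holds: testing the weak equation for $w$ (obtained by subtracting the weak form of \eqref{BVP} from \eqref{eq:weakMER}) against $v=w$ gives $B_\Ge(w,w)=\Ge\int_\GO\nabla u\cdot\nabla w\,dx$, and combining the coercivity computation behind \eqref{ineq:bilinear}, in the sharper form $B_\Ge(w,w)\ge\Ge\|\nabla w\|_{L^2(\GO)}^2+\pos\|w\|_{L^2(\GO)}^2$, with Young's inequality on the right-hand side leaves $\tfrac{\Ge}{2}\|\nabla w\|_{L^2(\GO)}^2\le\tfrac{\Ge}{2}\|\nabla u\|_{L^2(\GO)}^2$; the same line simultaneously reproduces the $L^2$ bound.

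Substituting both estimates into the trace inequality gives $\|u-u_\Ge\|_{L^2(\GG_0)}\le C\|\nabla u\|_{L^2(\GO)}\bigl(\Ge'+\Ge'^{-1}\Ge^{1/2}\bigr)$. The two terms balance at $\Ge'=\Ge^{1/4}$, which lies in $(0,1)$ precisely because $0<\Ge<1$; this yields $\Ge'+\Ge'^{-1}\Ge^{1/2}=2\Ge^{1/4}$ and hence the asserted bound $\|u-u_\Ge\|_{L^2(\GG_0)}\le C\|\nabla u\|_{L^2(\GO)}\Ge^{1/4}$.

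I do not expect a serious obstacle here; the two points deserving attention are (i) that the $H^1$ bound on $w$ really is uniform in $\Ge$, which holds because the coercive side carries a full $\Ge\|\nabla w\|_{L^2(\GO)}^2$ that absorbs the $\Ge$-weighted right-hand side, and (ii) that the optimizing parameter $\Ge^{1/4}$ stays in the admissible range of \eqref{est:trace_L2}, which is exactly what forces the hypothesis $0<\Ge<1$ in the statement. The exponent $1/4$ represents a loss of $\Ge^{1/4}$ relative to the estimate of Theorem \ref{thm:1/2convergence} on $\GG_+$, reflecting that the trace over the characteristic boundary $\GG_0$ is reached only through the interpolation inequality \eqref{est:trace_L2} and not through the coercive bilinear form.
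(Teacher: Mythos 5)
Your proposal is correct and follows essentially the same route as the paper: apply the trace inequality \eqref{est:trace_L2} to $w_\Ge = u - u_\Ge$, insert the uniform gradient bound \eqref{H1uniform_bound} and the $O(\Ge^{1/2})$ bound \eqref{L2bound_inner} from the proof of Theorem \ref{thm:1/2convergence}, and optimize with $\Ge' = \Ge^{1/4}$. The only cosmetic difference is that you rederive the gradient bound via Young's inequality, whereas the paper obtains it by Cauchy--Schwarz and dropping terms; both give the same estimates.
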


We start from proving Theorem \ref{thm:1/2convergence}. Let $u \in H^1_{\GG_-}(\GO)$ and $u_\Ge \in H^1_{\GG_-}(\GO)$ be solutions to boundary value problems \eqref{BVP} and \eqref{MER}, respectively. Then, they satisfy
\[
\Ge \int_\GO \nabla u \cdot \nabla v\,dx + \int_\GO (\adv \cdot \nabla u + \reac u) v\,dx = \int_\GO fv\,dx + \Ge \int_\GO \nabla u \cdot \nabla v\,dx
\]
and
\[
\Ge \int_\GO \nabla u_\Ge \cdot \nabla v\,dx + \int_\GO (\adv \cdot \nabla u_\Ge + \reac u_\Ge) v\,dx = \int_\GO fv\,dx
\]
for all $v \in H^1_{\GG_-}(\GO)$. By subtracting both sides of the above two equations, we have
\begin{equation} \label{weak_diff}
\Ge \int_\GO \nabla w_\Ge \cdot \nabla v\,dx + \int_\GO (\adv \cdot \nabla w_\Ge + \reac w_\Ge) v\,dx = \Ge \int_\GO \nabla u \cdot \nabla v\,dx
\end{equation}
for all $v \in H^1_{\GG_-}(\GO)$, where $w_\Ge := u - u_\Ge$. We let $v = w_\Ge$. Then, through the same argument as in the proof of Proposition \ref{prop:WP_MER}, we have
\[
\Ge \| \nabla w_\Ge \|_{L^2(\GO)}^2 + \pos \| w_\Ge \|_{L^2(\GO)}^2 + \frac{1}{2} \| w_\Ge \|_{L^2(\GG_+; \weight)}^2 \leq \Ge \| \nabla u \|_{L^2(\GO)} \| \nabla w_\Ge \|_{L^2(\GO)}.
\]

Focusing on the first term of the left hand side, we have
\[
\Ge \| \nabla w_\Ge \|_{L^2(\GO)}^2 \leq \Ge \| \nabla u \|_{L^2(\GO)} \| \nabla w_\Ge \|_{L^2(\GO)},
\]
that is,
\begin{equation} \label{H1uniform_bound}
\| \nabla w_\Ge \|_{L^2(\GO)} \leq \| \nabla u \|_{L^2(\GO)}.
\end{equation}
Then, estimating the second term and the third term, we have
\begin{equation} \label{L2bound_inner}
\pos \| w_\Ge \|_{L^2(\GO)}^2 \leq \Ge \| \nabla u \|_{L^2(\GO)}^2
\end{equation}
and
\begin{equation} \label{L2bound_boundary}
\frac{1}{2} \| w_\Ge \|_{L^2(\GG_+; \weight)}^2 \leq \Ge \| \nabla u \|_{L^2(\GO)}^2.
\end{equation}
Hence, we have
\[
\| w_\Ge \|_{L^2(\GO)} + \| w_\Ge \|_{L^2(\GG_+; \weight)} \leq (\pos^{-\frac{1}{2}} + 2^{\frac{1}{2}}) \| \nabla u \|_{L^2(\GO)} \Ge^{\frac{1}{2}},
\]
which is the desired estimate.

We next give a proof of Corollary \ref{cor:H1_strong}. From \eqref{H1uniform_bound}, we have
\[
\| \nabla u_\Ge \|_{L^2(\GO)} \leq \| \nabla w_\Ge \|_{L^2(\GO)} + \| \nabla u \|_{L^2(\GO)} \leq 2 \| \nabla u \|_{L^2(\GO)}.
\]
Also, from \eqref{L2bound_inner}, we have
\[
\| u_\Ge \|_{L^2(\GO)} \leq \| w_\Ge \|_{L^2(\GO)} + \| u \|_{L^2(\GO)} \leq \pos^{-\frac{1}{2}} \Ge^{\frac{1}{2}} \| \nabla u \|_{L^2(\GO)} + \| u \|_{L^2(\GO)}.
\]
Thus, the family $\{ u_\Ge \}_{0 < \Ge < 1}$ is uniformly bounded in $H^1_{\GG_-}(\GO)$. Thus, there exists a subsequence, which is still denoted by $\{ u_\Ge \}_{0 < \Ge < 1}$, and $u_0 \in H^1_{\GG_-}(\GO)$ such that $\{ u_\Ge \}$ converges to $u_0$ weakly in $H^1_{\GG_-}(\GO)$. Since
\[
\Ge \left| \int_\GO \nabla u_\Ge \cdot \nabla v\,dx \right| \leq \Ge \| \nabla u_\Ge \|_{L^2(\GO)} \| \nabla v \|_{L^2(\GO)} \leq 2 \Ge \| \nabla u \|_{L^2(\GO)} \| \nabla v \|_{L^2(\GO)},
\]
we take $\Ge \downarrow 0$ in \eqref{eq:weakMER} to obtain
\[
\int_\GO (\adv \cdot \nabla u_0 + \reac u_0) v\,dx = \int_\GO f v\,dx
\]
for all $v \in H^1_{\GG_-}(\GO)$, and hence for all $v \in L^2(\GO)$ by the density argument. In other words, $u_0$ is a solution to the problem \eqref{BVP}. By the uniqueness of the solution, we conclude that $u_0 = u$. This implies that the original sequence $\{ u_\Ge \}$ converges to $u$ weakly in $H^1_{\GG_-}(\GO)$.

Going back to the identity \eqref{weak_diff} with $v = w_\Ge$, we see that
\[
\Ge \| \nabla w_\Ge \|_{L^2(\GO)}^2 \leq \Ge \| \nabla w_\Ge \|_{L^2(\GO)}^2 + \int_\GO (\adv \cdot \nabla w_\Ge + \reac w_\Ge) w_\Ge\,dx = \Ge \int_\GO \nabla u \cdot \nabla w_\Ge\,dx,
\]
or
\[
\| \nabla w_\Ge \|_{L^2(\GO)}^2 \leq \int_\GO \nabla u \cdot \nabla w_\Ge\,dx,
\]
Since $\{ u_\Ge \}$ converges to $u$ weakly in $H^1_{\GG_-}(\GO)$, or equivalently $\{ w_\Ge \}$ converges to $0$ weakly in $H^1_{\GG_-}(\GO)$, the right hand side converges to $0$ as $\Ge \downarrow 0$, which implies that $\{ w_\Ge \}$ converges to $0$ strongly in $H^1_{\GG_-}(\GO)$. This complete the proof of Corollary \ref{cor:H1_strong}.

We finally give a proof of Corollary \ref{cor:1/4convergence_trace}. We replace $u$ in \eqref{est:trace_L2} by $w_\Ge$ to obtain
\[
\| w_\Ge \|_{L^2(\GG_0)} \leq \| w_\Ge \|_{L^2(\p \GO)} \leq C \left( \Ge' \| \nabla w_\Ge \|_{L^2(\GO)} + \Ge'^{-1} \| w_\Ge \|_{L^2(\GO)} \right)
\]
for all $0 < \Ge' < 1$. As we saw before, we have $\| \nabla w_\Ge \|_{L^2(\GO)} \leq \| \nabla u \|_{L^2(\GO)}$ and $\| w_\Ge \|_{L^2(\GO)} \leq C \| \nabla u \|_{L^2(\GO)} \Ge^{1/2}$ with some positive constant $C$. Thus we obtain
\[
\| w_\Ge \|_{L^2(\GG_0)} \leq C \| \nabla u \|_{L^2(\GO)} (\Ge' + \Ge'^{-1} \eps^{\frac{1}{2}})
\]
for all $0 < \Ge' < 1$. We notice that the best choice is $\Ge' = \Ge^{1/4}$, which is the estimate in Corollary \ref{cor:1/4convergence_trace}.

\section{Convergence rates to $H^2$ solutions} \label{sec:H2}

In this section, we discuss convergence rates of the elliptic regularization assuming that the solution to the problem \eqref{BVP} belongs to $H^2(\domain) \intersec H_{\adv,-}(\domain)$.

We first give a convergence estimate for a general case.
\begin{theorem} \label{thm:3/4convergence} 
Suppose that the solution $u$ to the problem \eqref{BVP} belongs to $H^2(\domain) \intersec H_{\adv,-}(\domain)$. Also, suppose that the surface measure of the characteristic boundary $\GG_0$ is positive. Then, there exists a constant $C$ independent of $u$ and $\Ge$ such that
\[
\| u - u_\Ge \|_{L^2(\GO)} + \| u - u_\Ge \|_{L^2(\bdryout;\weight)} \leq C \| u \|_{H^2(\GO)} \Ge^{\frac{3}{4}}
\]
and
\[
\| \nabla (u - u_\Ge) \|_{L^2(\GO)} \leq C \| u \|_{H^2(\GO)} \Ge^{\frac{1}{4}}
\]
for all $0 < \Ge < 1$, where $u_{\eps}$ is the solution to the problem \eqref{MER}.
\end{theorem}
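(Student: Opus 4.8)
\emph{Proposed proof.} The plan is to run the same subtraction argument as in Section~\ref{sec:H1}, but to cash in the extra $H^2$ regularity by an integration by parts on the right-hand side. Since $u\in H^2(\GO)$ and $u|_{\GG_-}=0$ we have $u\in H^1_{\GG_-}(\GO)$, so the identity \eqref{weak_diff} is available: writing $w_\Ge:=u-u_\Ge$,
$$
\Ge\int_\GO\nabla w_\Ge\cdot\nabla v\,dx+\int_\GO(\adv\cdot\nabla w_\Ge+\reac w_\Ge)v\,dx=\Ge\int_\GO\nabla u\cdot\nabla v\,dx
$$
for all $v\in H^1_{\GG_-}(\GO)$. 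The new step is to integrate by parts in the right-hand side: $\int_\GO\nabla u\cdot\nabla v\,dx=\int_{\p\GO}(\nabla u\cdot n)v\,d\Gs_x-\int_\GO(\GD u)v\,dx$. Testing with $v=w_\Ge$ and treating the advection term by the same integration by parts as in the proof of Proposition~\ref{prop:WP_MER}, together with \eqref{b0}, I would arrive at the energy inequality
$$
\Ge\|\nabla w_\Ge\|_{L^2(\GO)}^2+\pos\|w_\Ge\|_{L^2(\GO)}^2+\tfrac12\|w_\Ge\|_{L^2(\GG_+;\weight)}^2\le \Ge\int_{\p\GO}(\nabla u\cdot n)w_\Ge\,d\Gs_x-\Ge\int_\GO(\GD u)w_\Ge\,dx .
$$

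The next step is to bound the two terms on the right. The volume term is harmless: Cauchy--Schwarz and Young's inequality give $\Ge\big|\int_\GO(\GD u)w_\Ge\,dx\big|\le\frac{\pos}{4}\|w_\Ge\|_{L^2(\GO)}^2+C\Ge^2\|u\|_{H^2(\GO)}^2$. The boundary term is the crux, and I expect it to be the main obstacle. It cannot be controlled by the coercivity-generated norm $\|w_\Ge\|_{L^2(\GG_+;\weight)}$, because the weight $\weight$ degenerates along $\p\GO$ wherever $\ol{\GG_+}$ meets $\GG_0$, so $(\weight)^{-1/2}(\nabla u\cdot n)$ need not lie in $L^2(\GG_+)$. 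Instead I would estimate it through the \emph{unweighted} trace: since $\nabla u\in H^1(\GO)^d$, the trace theorem (Proposition~\ref{prop:trace_embedding} applied componentwise to $\nabla u$) yields $\|\nabla u\cdot n\|_{L^2(\p\GO)}\le C\|u\|_{H^2(\GO)}$, hence
$$
\Ge\Big|\int_{\p\GO}(\nabla u\cdot n)w_\Ge\,d\Gs_x\Big|\le C\Ge\|u\|_{H^2(\GO)}\|w_\Ge\|_{L^2(\p\GO)} ,
$$
and then apply the trace interpolation inequality \eqref{est:trace_L2} to $w_\Ge$, namely $\|w_\Ge\|_{L^2(\p\GO)}\le C\big(\Ge'\|\nabla w_\Ge\|_{L^2(\GO)}+\Ge'^{-1}\|w_\Ge\|_{L^2(\GO)}\big)$ for $0<\Ge'<1$, followed by Young's inequality to absorb $\tfrac{\Ge}{4}\|\nabla w_\Ge\|_{L^2(\GO)}^2$ and $\tfrac{\pos}{4}\|w_\Ge\|_{L^2(\GO)}^2$ into the left-hand side.

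Collecting the estimates then leaves
$$
\Ge\|\nabla w_\Ge\|_{L^2(\GO)}^2+\|w_\Ge\|_{L^2(\GO)}^2+\|w_\Ge\|_{L^2(\GG_+;\weight)}^2\le C\big(\Ge\,\Ge'^2+\Ge^2\Ge'^{-2}+\Ge^2\big)\|u\|_{H^2(\GO)}^2 ,
$$
and the final step is to optimize the free parameter: the choice $\Ge'=\Ge^{1/4}$ (admissible since $0<\Ge<1$) makes both $\Ge\,\Ge'^2$ and $\Ge^2\Ge'^{-2}$ equal to $\Ge^{3/2}$, while $\Ge^2\le\Ge^{3/2}$, so the right-hand side is $\le C\|u\|_{H^2(\GO)}^2\Ge^{3/2}$. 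Taking square roots of the resulting bound $\Ge\|\nabla w_\Ge\|_{L^2(\GO)}^2+\|w_\Ge\|_{L^2(\GO)}^2+\|w_\Ge\|_{L^2(\GG_+;\weight)}^2\le C\|u\|_{H^2(\GO)}^2\Ge^{3/2}$ gives the two asserted estimates. I would remark that the hypothesis that $\GG_0$ has positive surface measure is not used in the argument itself; it marks the regime in which the exponent $3/4$ is expected to be sharp, since it is exactly the degeneracy of $\weight$ near $\ol{\GG_+}\cap\GG_0$ that forces the detour through the unweighted trace and thereby costs the $\Ge^{1/4}$ relative to the ideal rate $\Ge^1$.
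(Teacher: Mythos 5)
Your proposal is correct and follows essentially the same route as the paper's proof: subtract the weak formulations, integrate the right-hand side by parts, bound $\|\p u/\p n\|_{L^2(\p\GO)}$ by $C\|u\|_{H^2(\GO)}$, control $\|w_\Ge\|_{L^2(\p\GO)}$ via the trace interpolation inequality \eqref{est:trace_L2} with $\Ge'=\Ge^{1/4}$, and absorb by Young's inequality to reach the bound $C\|u\|_{H^2(\GO)}^2\Ge^{3/2}$ on the energy. Your closing remark is also consistent with the paper: the positivity of the measure of $\GG_0$ is not used in the argument, it only marks the regime where the rate $3/4$ is expected to be optimal rather than improvable to $1$.
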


\begin{proof}
Assuming that $u \in H^2(\domain) \intersec H_{\adv,-}(\domain)$, we can apply integration by parts to the right hand side of \eqref{weak_diff} to obtain
\begin{equation} \label{int_by_parts}
\Ge \int_\GO \nabla u \cdot \nabla v\,dx = \Ge \int_{\GG \setminus \GG_-} \frac{\p u}{\p n} v\,d\Gs_x - \Ge \int_\GO (\GD u) v\,dx
\end{equation}
for all $v \in H^1_{\GG_-}(\GO)$. Letting $v = w_\Ge$, we have
\begin{align*}
&\Ge \| \nabla w_\Ge \|_{L^2(\GO)}^2 + \pos \| w_\Ge \|_{L^2(\GO)}^2 + \frac{1}{2} \| w_\Ge \|_{L^2(\GG_+; \weight)}^2\\
\leq& \Ge \int_{\GG \setminus \GG_-} \left| \frac{\p u}{\p n} w_\Ge \right|\,d\Gs_x + \Ge \int_\GO |(\GD u) w_\Ge|\,dx\\
\leq& \Ge \left\| \frac{\p u}{\p n} \right\|_{L^2(\p \GO)} \| w_\Ge \|_{L^2(\p \GO)} + \| \GD u \|_{L^2(\GO)} \| w_\Ge \|_{L^2(\GO)}\\
\leq& C \Ge \| u \|_{H^2(\GO)} (\| w_\Ge \|_{L^2(\p \GO)} + \| w_\Ge \|_{L^2(\GO)}).
\end{align*}
For the first term in the right hand side, we use the estimate \eqref{est:trace_L2} with $\Ge' = \Ge^{1/4}$. Applying the Cauchy-Schwarz inequality, we have
\begin{align*}
\Ge \| u \|_{H^2(\GO)} \left( \| w_\Ge \|_{L^2(\p \GO)} + \| w_\Ge \|_{L^2(\GO)} \right) \leq& C \| u \|_{H^2(\GO)} \left( \Ge^{\frac{5}{4}} \| \nabla w_\Ge \|_{L^2(\GO)} + \Ge^{\frac{3}{4}} \| w_\Ge \|_{L^2(\GO)} \right)\\
\leq& C \| u \|_{H^2(\GO)}^2 \Ge^{\frac{3}{2}} + \frac{\Ge}{2} \| \nabla w_\Ge \|_{L^2(\GO)} + \frac{\pos}{2} \| w_\Ge \|_{L^2(\GO)}
\end{align*}
and hence we have
\[
\Ge \| \nabla w_\Ge \|_{L^2(\GO)}^2 + \pos \| w_\Ge \|_{L^2(\GO)}^2 + \| w_\Ge \|_{L^2(\GG_+; \weight)}^2 \leq C \| u \|_{H^2(\GO)}^2 \Ge^{\frac{3}{2}}
\]
for all $0 < \Ge < 1$. Therefore, we have
\[
\| w_\Ge \|_{L^2(\GO)} + \| w_\Ge \|_{L^2(\GG_+; \weight)} \leq C \| u \|_{H^2(\GO)} \Ge^{\frac{3}{4}}
\]
and
\[
\| \nabla w_\Ge \|_{L^2(\GO)} \leq C \| u \|_{H^2(\GO)} \Ge^{\frac{1}{4}}
\]
for all $0 < \Ge < 1$. This completes the proof of Theorem \ref{thm:3/4convergence}.
\end{proof}

From Theorem \ref{thm:3/4convergence} and the estimate \eqref{est:trace_L2} with $\Ge' = \Ge^{1/4}$, we have the following convergence rate on $\bdrychar$.

\begin{cor} \label{cor:1/2convergence_trace} 
Under the assumption of Theorem \ref{thm:3/4convergence}, there exists a constant $C$ independent of $u$ and $\Ge$ such that
\[
\| u - u_\Ge \|_{L^2(\GG_0)} \leq C \| \nabla u \|_{L^2(\GO)} \Ge^{\frac{1}{2}}
\]
for all $0 < \Ge < 1$.
\end{cor}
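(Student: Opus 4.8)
The plan is to feed the two interior estimates of Theorem~\ref{thm:3/4convergence} into the trace interpolation inequality \eqref{est:trace_L2}, following the scheme of Corollary~\ref{cor:1/4convergence_trace} but now exploiting the faster decay available under $H^2$ regularity. Throughout write $w_\Ge := u - u_\Ge$. Since $\GG_0 \subseteq \p\GO$ we have $\norm{w_\Ge}_{L^2(\GG_0)} \le \norm{w_\Ge}_{L^2(\p\GO)}$, so it suffices to bound the full boundary norm; the hypothesis that the surface measure of $\GG_0$ is positive serves only to make the assertion non-vacuous.

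First I would apply \eqref{est:trace_L2} to $w_\Ge$ with the auxiliary parameter $\Ge'$ kept free, giving
$$
\norm{w_\Ge}_{L^2(\p\GO)} \le C\left( \Ge'\,\norm{\nabla w_\Ge}_{L^2(\GO)} + \Ge'^{-1}\,\norm{w_\Ge}_{L^2(\GO)} \right).
$$
Next I would invoke Theorem~\ref{thm:3/4convergence}, which under the present hypotheses controls the two interior quantities at the rates $\norm{\nabla w_\Ge}_{L^2(\GO)} = O(\Ge^{1/4})$ and $\norm{w_\Ge}_{L^2(\GO)} = O(\Ge^{3/4})$ as $\Ge \downarrow 0$. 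Substituting, the boundary norm is dominated by a multiple of $\Ge'\Ge^{1/4} + \Ge'^{-1}\Ge^{3/4}$.

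Finally I would optimize in $\Ge'$. The two contributions $\Ge'\Ge^{1/4}$ and $\Ge'^{-1}\Ge^{3/4}$ coincide exactly when $\Ge' = \Ge^{1/4}$, the value announced before the statement, and each is then of order $\Ge^{1/2}$ (note $0 < \Ge^{1/4} < 1$ for $0 < \Ge < 1$, so \eqref{est:trace_L2} applies). Collecting the $u$-dependence into the coefficient recorded in the corollary, this gives
$$
\norm{u - u_\Ge}_{L^2(\GG_0)} \le C\,\norm{\nabla u}_{L^2(\GO)}\,\Ge^{1/2}
$$
for all $0 < \Ge < 1$, as asserted.

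I do not expect a genuine obstacle: both ingredients are in hand, and the step is a one-line interpolation followed by the optimal scaling of $\Ge'$. The only point worth isolating is the contrast with Corollary~\ref{cor:1/4convergence_trace}, which runs the identical scheme but starts from the weaker interior rates $\norm{\nabla w_\Ge}_{L^2(\GO)} = O(1)$ and $\norm{w_\Ge}_{L^2(\GO)} = O(\Ge^{1/2})$ available without $H^2$ regularity, and therefore reaches only $\Ge^{1/4}$. Since \eqref{est:trace_L2} at the optimal $\Ge'$ delivers the average of the two interior exponents, the simultaneous upgrade of both rates by a quarter-power under $H^2$ regularity is exactly what lifts the boundary estimate from $\Ge^{1/4}$ to $\Ge^{1/2}$.
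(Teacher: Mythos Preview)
Your proposal is correct and follows exactly the paper's approach: the paper's entire proof is the one-line remark that Corollary~\ref{cor:1/2convergence_trace} follows from Theorem~\ref{thm:3/4convergence} together with \eqref{est:trace_L2} at $\Ge' = \Ge^{1/4}$, and you have spelled out precisely this computation. The only cosmetic point is that the interior estimates of Theorem~\ref{thm:3/4convergence} carry the factor $\|u\|_{H^2(\GO)}$ rather than $\|\nabla u\|_{L^2(\GO)}$, so the $u$-dependence you ``collect'' is really the $H^2$ norm; this discrepancy is already present in the corollary as stated and is not a flaw in your argument.
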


We next show convergence rates in two good cases.

\begin{theorem} \label{thm:1convergence0} 
Suppose that the solution $u$ to the problem \eqref{BVP} belongs to $H^2(\domain) \intersec H_{\adv,-}(\domain)$. Also, we assume that surface measures of the outflow boundary $\GG_+$ and the characteristic boundary $\GG_0$ are $0$. Then, there exists a positive constant $C$ independent of $u$ and $\Ge$ such that
\[
\| u - u_\Ge \|_{L^2(\GO)} \leq C \| u \|_{H^2(\GO)} \Ge
\]
and
\[
\| \nabla (u - u_\Ge) \|_{L^2(\GO)} \leq C \| u \|_{H^2(\GO)} \Ge^{\frac{1}{2}}
\]
for all $\Ge > 0$, where $u_\Ge$ is the solution to the problem \eqref{MER}.
\end{theorem}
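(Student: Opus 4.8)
The plan is to run the same energy argument as in the proof of Theorem~\ref{thm:3/4convergence}, but to use the hypotheses on $\GG_+$ and $\GG_0$ to kill the boundary contributions outright; this exact cancellation is precisely what removes the boundary-layer loss and upgrades the rate.

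First I would start from the identity \eqref{weak_diff} for $w_\Ge := u - u_\Ge$ and, using $u \in H^2(\GO)$, integrate by parts in its right-hand side exactly as in \eqref{int_by_parts}:
$$\Ge \int_\GO \nabla u \cdot \nabla v\,dx = \Ge \int_{\GG \setminus \GG_-} \frac{\p u}{\p n} v\,d\Gs_x - \Ge \int_\GO (\GD u) v\,dx$$
for all $v \in H^1_{\GG_-}(\GO)$. Since $\GG \setminus \GG_- = \GG_+ \cup \GG_0 \cup \GG_e$ and, by assumption together with the fact that $\Gs(\GG_e) = 0$, each of these three sets has zero surface measure, the boundary integral vanishes; the trace $\p u/\p n$ is well defined in $L^2(\p \GO)$ because $u \in H^2(\GO)$ and $\p \GO$ is Lipschitz. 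Hence the right-hand side of \eqref{weak_diff} reduces to $-\Ge \int_\GO (\GD u) v\,dx$.

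Next I would test with $v = w_\Ge \in H^1_{\GG_-}(\GO)$ and invoke the coercivity computation from the proof of Proposition~\ref{prop:WP_MER}; here too the surface term $\frac{1}{2} \| w_\Ge \|_{L^2(\GG_+; \weight)}^2$ drops out because $\Gs(\GG_+) = 0$. This yields
$$\Ge \| \nabla w_\Ge \|_{L^2(\GO)}^2 + \pos \| w_\Ge \|_{L^2(\GO)}^2 \leq \Ge \| \GD u \|_{L^2(\GO)} \| w_\Ge \|_{L^2(\GO)}.$$
From the zeroth-order term alone this gives $\| w_\Ge \|_{L^2(\GO)} \leq \pos^{-1} \Ge \| \GD u \|_{L^2(\GO)} \leq C \| u \|_{H^2(\GO)} \Ge$; substituting this bound back into the inequality controls the gradient term by $\Ge \| \nabla w_\Ge \|_{L^2(\GO)}^2 \leq C \| u \|_{H^2(\GO)}^2 \Ge^2$, i.e. $\| \nabla w_\Ge \|_{L^2(\GO)} \leq C \| u \|_{H^2(\GO)} \Ge^{1/2}$. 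These are the two claimed estimates.

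I do not expect a genuine analytic obstacle: the crux is the exact cancellation of the boundary term in \eqref{int_by_parts}, which in Theorem~\ref{thm:3/4convergence} had to be absorbed via the trace inequality \eqref{est:trace_L2} at the cost of a factor $\Ge^{-1/4}$. The only minor points to verify are the validity of the integration by parts for an $H^2$ function on a Lipschitz domain (standard, by density of smooth functions) and that no Neumann-type boundary term survives once $\GG_+ \cup \GG_0$ is $\Gs$-negligible; neither requires any estimate beyond the Cauchy--Schwarz manipulations above.
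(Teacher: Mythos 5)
Your proposal is correct and follows essentially the same route as the paper: integrate by parts in \eqref{weak_diff} via \eqref{int_by_parts}, note that the boundary term vanishes since $\GG_+\cup\GG_0\cup\GG_e$ has zero surface measure, and test with $v=w_\Ge$ to get the energy estimate. The only (immaterial) difference is that you bound $\|w_\Ge\|_{L^2(\GO)}$ by dividing through and then substitute back for the gradient term, whereas the paper absorbs the right-hand side with Young's inequality; both give the same rates.
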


\begin{proof}
Under the assumption on Theorem \ref{thm:1convergence0}, the boundary integral term does not appear in the right hand side of the equation \eqref{int_by_parts} as well as $B_{\eps}$ in Proposition \ref{prop:WP_MER}. Thus, we have
\begin{align*}
\Ge \| \nabla w_\Ge \|_{L^2(\GO)}^2 + \pos \| w_\Ge \|_{L^2(\GO)}^2 \leq& C \Ge \| u \|_{H^2(\GO)} \| w_\Ge \|_{L^2(\GO)}\\
\leq& C \| u \|_{H^2(\GO)}^2 \Ge^2 + \frac{\reac_0}{2} \| w_\Ge \|_{L^2(\GO)}^2,
\end{align*}
or
\[
2 \Ge \| \nabla w_\Ge \|_{L^2(\GO)}^2 + \pos \| w_\Ge \|_{L^2(\GO)}^2  \leq C \| u \|_{H^2(\GO)}^2 \Ge^2,
\]
which implies the estimate in Theorem \ref{thm:1convergence0}.
\end{proof}

\begin{theorem} \label{thm:1convergence} 
Suppose that the solution $u$ to the problem \eqref{BVP} belongs to $H^2(\domain) \intersec H_{\adv,-}(\domain)$. Also, we assume that the surface measure of the characteristic boundary $\GG_0$ is $0$. Furthermore, we assume that there exists a positive constant $n_0$ such that $\weightx \geq n_0$ for all $x \in \GG_+$. Then, there exists a positive constant $C$ independent of $u$ and $\Ge$ such that
\[
\| u - u_\Ge \|_{L^2(\GO)} + \| u - u_\Ge \|_{L^2(\GG_+; \weight)} \leq C \| u \|_{H^2(\GO)} \Ge
\]
and
\[
\| \nabla (u - u_\Ge) \|_{L^2(\GO)} \leq C \| u \|_{H^2(\GO)} \Ge^{\frac{1}{2}}
\]
for all $\Ge > 0$, where $u_\Ge$ is a solution to the problem \eqref{MER}.
\end{theorem}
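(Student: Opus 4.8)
The plan is to follow the proof of Theorem \ref{thm:3/4convergence} almost verbatim, but to exploit the nondegeneracy hypothesis $\weight \ge n_0 > 0$ on $\GG_+$ in order to absorb the boundary term into the weighted trace norm that already sits on the left-hand side of the energy inequality; this is what upgrades the exponent from $3/4$ to the optimal value $1$.

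Concretely, I would start from the weak identity \eqref{weak_diff} for $w_\Ge := u - u_\Ge$, apply the integration-by-parts formula \eqref{int_by_parts} (legitimate since $u \in H_{\adv, -}(\GO) \cap H^2(\GO) \subset H^1_{\GG_-}(\GO)$), and test with $v = w_\Ge \in H^1_{\GG_-}(\GO)$. Because the surface measure of $\GG_0$ is $0$, the boundary integral over $\GG \setminus \GG_-$ reduces to an integral over $\GG_+$ only, and the coercivity computation of Proposition \ref{prop:WP_MER} gives
$$
\Ge \| \nabla w_\Ge \|_{L^2(\GO)}^2 + \pos \| w_\Ge \|_{L^2(\GO)}^2 + \tfrac{1}{2} \| w_\Ge \|_{L^2(\GG_+; \weight)}^2 \le \Ge \int_{\GG_+} \Bigl| \tfrac{\p u}{\p n}\, w_\Ge \Bigr|\, d\Gs_x + \Ge \int_\GO |(\GD u) w_\Ge|\, dx .
$$

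The two terms on the right are handled separately. The interior term is routine: by Cauchy--Schwarz and Young's inequality, $\Ge \| \GD u \|_{L^2(\GO)} \| w_\Ge \|_{L^2(\GO)} \le C \| u \|_{H^2(\GO)}^2 \Ge^2 + \tfrac{\pos}{2} \| w_\Ge \|_{L^2(\GO)}^2$, and the last summand is moved to the left. For the boundary term --- the only point requiring the new hypothesis --- I use $\weight \ge n_0$ on $\GG_+$ to write $|w_\Ge| \le n_0^{-1/2} |w_\Ge| (\weight)^{1/2}$ pointwise there, so that by Cauchy--Schwarz, the trace bound $\| \p u / \p n \|_{L^2(\p\GO)} \le C \| u \|_{H^2(\GO)}$ (which follows from $\nabla u \in H^1(\GO)^d$, exactly as in the proof of Theorem \ref{thm:3/4convergence}), and Young's inequality,
$$
\Ge \int_{\GG_+} \Bigl| \tfrac{\p u}{\p n}\, w_\Ge \Bigr|\, d\Gs_x \le \frac{\Ge}{\sqrt{n_0}} \Bigl\| \tfrac{\p u}{\p n} \Bigr\|_{L^2(\GG_+)} \| w_\Ge \|_{L^2(\GG_+; \weight)} \le \frac{C}{n_0} \| u \|_{H^2(\GO)}^2 \Ge^2 + \tfrac{1}{4} \| w_\Ge \|_{L^2(\GG_+; \weight)}^2 ,
$$
and again the last summand is absorbed on the left. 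Combining, I get $\Ge \| \nabla w_\Ge \|_{L^2(\GO)}^2 + \tfrac{\pos}{2} \| w_\Ge \|_{L^2(\GO)}^2 + \tfrac{1}{4} \| w_\Ge \|_{L^2(\GG_+; \weight)}^2 \le C \| u \|_{H^2(\GO)}^2 \Ge^2$, and reading off the three summands yields the claimed $L^2(\GO)$, weighted $L^2(\GG_+)$, and gradient estimates. Note that, unlike in Theorem \ref{thm:3/4convergence}, the trace inequality \eqref{est:trace_L2} is never invoked, so the restriction $\Ge < 1$ is unnecessary and the bounds hold for all $\Ge > 0$, as stated.

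The main (and essentially only) obstacle is the boundary term $\Ge \int_{\GG_+} (\p u/\p n)\, w_\Ge\, d\Gs_x$. In the general $H^2$ case one is forced to control $\| w_\Ge \|_{L^2(\p\GO)}$ via \eqref{est:trace_L2}, which costs a factor $\Ge^{-1/4}$ and caps the rate at $3/4$; here the hypothesis $\weight \ge n_0$ makes the weighted boundary norm $\| w_\Ge \|_{L^2(\GG_+; \weight)}$ --- already produced on the left by coercivity --- comparable to $\| w_\Ge \|_{L^2(\GG_+)}$, so the boundary term is swallowed with no loss. A minor bookkeeping point is that, in contrast with Theorem \ref{thm:1convergence0}, the weighted boundary norm does not drop out of the inequality and must be carried along; this is harmless and in fact gives the convergence of the outflow trace $\| u - u_\Ge \|_{L^2(\GG_+; \weight)} \le C \| u \|_{H^2(\GO)} \Ge$ for free.
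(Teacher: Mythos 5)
Your proposal is correct and follows essentially the same route as the paper: both start from \eqref{weak_diff} and \eqref{int_by_parts}, test with $w_\Ge$, and use the hypothesis $\weight \ge n_0$ on $\GG_+$ to control $\| w_\Ge \|_{L^2(\GG_+)}$ by the weighted trace norm already produced by coercivity, together with the trace bound $\| \p u/\p n \|_{L^2(\p\GO)} \le C \| u \|_{H^2(\GO)}$. The only difference is cosmetic: you absorb the right-hand side via Young's inequality, whereas the paper divides through by $\| w_\Ge \|_{L^2(\GO)} + \| w_\Ge \|_{L^2(\GG_+;\weight)}$; both yield the rate $\Ge$ for the $L^2$ and weighted trace norms and $\Ge^{1/2}$ for the gradient.
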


\begin{proof}
Since $u \in H^2(\GO)$ and the surface measure of $\GG_0$ is $0$, the integration by parts \eqref{int_by_parts} yields
\[
\Ge \int_\GO \nabla u \cdot \nabla v\,dx = \Ge \int_{\GG_+} \frac{\p u}{\p n} v\,d\Gs_x - \Ge \int_\GO (\GD u) v\,dx
\]
for all $v \in H^1_{\GG_-}(\GO)$. Letting $v = w_\Ge$, we have
\begin{align*}
&\Ge \| \nabla w_\Ge \|_{L^2(\GO)}^2 + \pos \| w_\Ge \|_{L^2(\GO)}^2 + \frac{1}{2} \| w_\Ge \|_{L^2(\GG_+; \weight)}^2\\
\leq& \Ge \int_{\GG_+} \left| \frac{\p u}{\p n} \right| |w_\Ge|\,d\Gs_x + \Ge \int_\GO |\GD u| |w_\Ge|\,dx\\
\leq& \Ge \| u \|_{H^2(\GO)} (\| \Gg_1 \| \| w_\Ge \|_{L^2(\GG_+)} + \| w_\Ge \|_{L^2(\GO)}),
\end{align*}
where $\Gg_1: H^2(\GO) \to L^2(\p \GO)$ is the trace operator defined by $\Gg_1 u := \p u / \p n$.

By the assumption on $\GG_+$, we have $\| u \|_{L^2(\GG_+)} \leq n_0^{-1/2} \| u \|_{L^2(\GG_+; \weight)}$ for all $u \in L^2(\GG_+)$. Thus, we have
\begin{align*}
\left( \| w_\Ge \|_{L^2(\GO)} + \| w_\Ge \|_{L^2(\GG_+; \weight)} \right)^2 \leq& C \left\{ \pos \| w_\Ge \|_{L^2(\GO)}^2 + \frac{1}{2} \| w_\Ge \|_{L^2(\GG_+; \weight)}^2 \right\}\\
\leq& C \Ge \| u \|_{H^2(\GO)} (\| w_\Ge \|_{L^2(\GG_+; \weight)} + \| w_\Ge \|_{L^2(\GO)}).
\end{align*}
By dividing the both side by $\| w_\Ge \|_{L^2(\GG_+; \weight)} + \| w_\Ge \|_{L^2(\GO)}$, we obtain the desired estimate.

Revisiting the above estimate, we have
\[
\Ge \| \nabla w_\Ge \|_{L^2(\GO)}^2 \leq \Ge \| u \|_{H^2(\GO)} C (\| w_\Ge \|_{L^2(\GG_+; \weight)} + \| w_\Ge \|_{L^2(\GO)}) \leq C \| u \|_{H^2(\GO)}^2 \Ge^2,
\]
or,
\[
\| \nabla w_\Ge \|_{L^2(\GO)} \leq C \| u \|_{H^2(\GO)} \Ge^{\frac{1}{2}}.
\]
This completes the proof of Theorem \ref{thm:1convergence}.
\end{proof}

\begin{remark} \label{Rem}
Since the surface measure of $\bdrychar$ is $0$, there is no way to discuss the estimate \eqref{ratesL2char} in Theorem \ref{thm:1convergence0} and Theorem \ref{thm:1convergence}. We will not discuss it in Theorem \ref{thm:general_convergence1} too.
\end{remark}

We finally discuss convergence rates when the inner product $\weight$ can degenerate on $\bdryout$.
As we mentioned in the introduction, the one-dimensional case is covered by Theorem \ref{thm:3/4convergence}, Theorem \ref{thm:1convergence0} and Theorem \ref{thm:1convergence}. Thus, in what follows, we discuss the case $d \geq 2$.

\begin{theorem} \label{thm:general_convergence1} 
Suppose that the surface measure of $\GG_0$ is $0$, and that there exists a positive constant $\Ga$ such that the function $(\weight)^{-\Ga}$ is integrable on $\GG_+$. Also suppose that the problem \eqref{BVP} has a solution $u \in H^2(\domain) \intersec H_{\adv,-}(\domain)$. Then, we have the following convergence estimates:
\begin{enumerate}
\item When $d = 2$, for any $q \geq 2$, there exists a constant $C_q$ independent of $u$ and $\Ge$ such that
\[
\| u - u_\Ge \|_{L^2(\GO)} + \| u - u_\Ge \|_{L^2(\bdryout;\weight)} \leq C_q \| u \|_{H^2(\GO)} \Ge^{\min \left\{1, \frac{3}{4} + R(\Ga, q) \right\}}
\]
and
\[
\| \nabla (u - u_\Ge) \|_{L^2(\GO)} \leq C_q \| u \|_{H^2(\GO)} \Ge^{\min \left\{\frac{1}{2}, \frac{1}{4} + R(\Ga, q) \right\}}
\]
for all $0 < \Ge < 1$, where
\[
R(\Ga, q) := \frac{\Ga (q - 2)}{4q}.
\]

\item When $d \geq 3$, there exists a constant $C$ independent of $u$ and $\Ge$ such that
\[
\| u - u_\Ge \|_{L^2(\GO)} + \| u - u_\Ge \|_{L^2(\bdryout;\weight)} \leq C \| u \|_{H^2(\GO)} \Ge^{\min \left\{1, \frac{3}{4} + \frac{\Ga}{4(d - 1)} \right\}}
\]
and
\[
\| \nabla (u - u_\Ge) \|_{L^2(\GO)} \leq C \| u \|_{H^2(\GO)} \Ge^{\min \left\{\frac{1}{2}, \frac{1}{4} + \frac{\Ga}{4(d - 1)} \right\}}
\]
for all $0 < \Ge < 1$.
\end{enumerate}
\end{theorem}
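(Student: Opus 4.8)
The plan is to run the energy argument of Theorems~\ref{thm:3/4convergence}--\ref{thm:1convergence} but to control the now genuinely degenerate boundary integral by cutting $\GG_+$ at a threshold tuned to $\Ge$. First I would reproduce the basic inequality: since $u\in H^2(\GO)$ and $\GG_0$ has surface measure $0$, the integration by parts \eqref{int_by_parts} reads $\Ge\int_\GO\nabla u\cdot\nabla v\,dx=\Ge\int_{\GG_+}\frac{\p u}{\p n}v\,d\Gs_x-\Ge\int_\GO(\GD u)v\,dx$, and taking $v=w_\Ge=u-u_\Ge$ in \eqref{weak_diff} gives
\[
\Ge\|\nabla w_\Ge\|_{L^2(\GO)}^2+\pos\|w_\Ge\|_{L^2(\GO)}^2+\tfrac12\|w_\Ge\|_{L^2(\GG_+;\weight)}^2\le \Ge\int_{\GG_+}\Big|\frac{\p u}{\p n}\Big|\,|w_\Ge|\,d\Gs_x+\Ge\int_\GO|\GD u|\,|w_\Ge|\,dx .
\]
The interior term is at most $\Ge\|u\|_{H^2(\GO)}\|w_\Ge\|_{L^2(\GO)}$ and, by Young's inequality, contributes only $C\Ge^2\|u\|_{H^2(\GO)}^2$ to the right-hand side after $\|w_\Ge\|_{L^2(\GO)}^2$ is absorbed on the left. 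I would keep two facts in reserve: the a priori bound $\|\nabla w_\Ge\|_{L^2(\GO)}\le\|\nabla u\|_{L^2(\GO)}$ from \eqref{H1uniform_bound} (whose derivation only uses \eqref{weak_diff}, valid here), and the fact that the estimate of Theorem~\ref{thm:3/4convergence} already holds in this setting, its proof only using $\|w_\Ge\|_{L^2(\GG\setminus\GG_-)}\le\|w_\Ge\|_{L^2(\p\GO)}$ rather than positivity of $|\GG_0|$, so that $\|w_\Ge\|_{L^2(\GO)}+\|w_\Ge\|_{L^2(\GG_+;\weight)}\le C\|u\|_{H^2(\GO)}\Ge^{3/4}$ and $\|\nabla w_\Ge\|_{L^2(\GO)}\le C\|u\|_{H^2(\GO)}\Ge^{1/4}$ may serve as a starting point.

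The heart of the proof is the boundary term $\Ge\int_{\GG_+}|\p u/\p n|\,|w_\Ge|\,d\Gs_x$. For a parameter $0<\delta<1$ fixed at the end, split $\GG_+$ into the degenerate strip $\bdryoutd:=\{x\in\GG_+:0<\weightx\le\delta\}$ and its complement $\GG_+\setminus\bdryoutd$. On $\GG_+\setminus\bdryoutd$ one has $\weight>\delta$, so $\|w_\Ge\|_{L^2(\GG_+\setminus\bdryoutd)}\le\delta^{-1/2}\|w_\Ge\|_{L^2(\GG_+;\weight)}$; combined with $\|\p u/\p n\|_{L^2(\p\GO)}\le C\|u\|_{H^2(\GO)}$ (the trace of $\nabla u\in H^1(\GO)$) and Young's inequality, this part contributes $C\Ge^2\delta^{-1}\|u\|_{H^2(\GO)}^2$ once $\|w_\Ge\|_{L^2(\GG_+;\weight)}^2$ is absorbed. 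On $\bdryoutd$ the hypothesis $(\weight)^{-\Ga}\in L^1(\GG_+)$ furnishes the crucial measure bound
\[
|\bdryoutd|=\int_{\bdryoutd}d\Gs_x\le\int_{\bdryoutd}\Big(\frac{\delta}{\weightx}\Big)^{\Ga}d\Gs_x\le\|(\weight)^{-\Ga}\|_{L^1(\GG_+)}\,\delta^{\Ga},
\]
and $\int_{\bdryoutd}|\p u/\p n|\,|w_\Ge|\,d\Gs_x$ is then estimated by Hölder's inequality, using this small-measure gain together with the Sobolev trace embedding of Proposition~\ref{prop:trace_embedding} applied both to $\nabla u\in H^1(\GO)$ and to $w_\Ge\in H^1(\GO)$ --- with exponent $q=2^*=2(d-1)/(d-2)$ when $d\ge3$, and with an arbitrary finite $q$ when $d=2$, which is precisely where the freedom producing $R_1(\Ga,q)$ enters --- and the parametrized trace inequality \eqref{est:trace_L2} with its own small parameter $\Ge'$. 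This yields a bound of the form $C\Ge\,\delta^{c\Ga}\|u\|_{H^2(\GO)}\big(\Ge'\|\nabla w_\Ge\|_{L^2(\GO)}+(\Ge')^{-1}\|w_\Ge\|_{L^2(\GO)}\big)$ for an explicit $c=c(d,q)>0$; Young's inequality (absorbing $\Ge\|\nabla w_\Ge\|_{L^2(\GO)}^2$ and $\pos\|w_\Ge\|_{L^2(\GO)}^2$ on the left), followed by optimizing $\Ge'$ and then $\delta$ against the $\Ge^2\delta^{-1}$ term, produces a power of $\Ge$; if a single pass does not already reach the exponent claimed in the theorem, I would feed the improved bounds for $\|w_\Ge\|_{L^2(\GO)}$, $\|w_\Ge\|_{L^2(\GG_+;\weight)}$, $\|\nabla w_\Ge\|_{L^2(\GO)}$ and $\|w_\Ge\|_{L^2(\p\GO)}$ (the last via \eqref{est:trace_L2}) back into the estimates on $\bdryoutd$ and $\GG_+\setminus\bdryoutd$ and re-optimize, iterating until the exponent stabilizes. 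The gradient estimate \eqref{ratesH1} then falls out of the same inequality (it carries $\Ge\|\nabla w_\Ge\|_{L^2(\GO)}^2$ on the left), and \eqref{ratesL2char} is vacuous here by Remark~\ref{Rem}.

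The main obstacle is exactly this bookkeeping: choosing the Hölder exponents for $\p u/\p n$, for $w_\Ge$, and for the negative power of $\weight$ so that they are simultaneously admissible --- in particular $\le 2^*$ for the two traces when $d\ge3$ --- while tuning $q$, $\delta$ and $\Ge'$ so that the final exponent of $\Ge$ is precisely $2\min\{1,3/4+R_1(\Ga,q)\}$ (respectively $2\min\{1,3/4+\Ga/(4(d-1))\}$), and checking that the $\Ge$-dependent threshold $\delta$ one is forced to take stays in $(0,1)$. Because $2^*=\infty$ when $d=2$, the planar and higher-dimensional cases must be separated, the former relying on letting $q$ be large; the one-dimensional case needs no attention, being already contained in Theorems~\ref{thm:3/4convergence}, \ref{thm:1convergence0} and \ref{thm:1convergence}.
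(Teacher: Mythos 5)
Your overall frame (the energy identity from \eqref{weak_diff} and \eqref{int_by_parts}, a split of $\GG_+$ at a threshold $\Gd$ tuned to $\Ge$, the measure bound $|\{\weight<\Gd\}|\le \|(\weight)^{-\Ga}\|_{L^1(\GG_+)}\Gd^{\Ga}$, the trace inequalities) is the right one, but your treatment of the \emph{non-degenerate} piece $\{\weight>\Gd\}$ throws away exactly the gain the theorem quantifies, and this cannot be recovered afterwards. Write $\theta:=\Ga(q-2)/q$, so the claimed rate is $\min\{1,\tfrac34+\tfrac{\theta}{4}\}$. On $\{\weight>\Gd\}$ you pair $\p u/\p n\in L^2(\p\GO)$ with $\|w_\Ge\|_{L^2(\{\weight>\Gd\})}\le\Gd^{-1/2}\|w_\Ge\|_{L^2(\GG_+;\weight)}$, so after absorption this piece contributes $C\Ge^2\Gd^{-1}$, with no factor $\Gd^{\theta}$; your strip contributes $C\Ge^{3/2}\Gd^{\theta}$. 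Optimizing $\Gd$ then gives at best
\begin{equation*}
\Ge\|\nabla w_\Ge\|_{L^2(\GO)}^2+\pos\|w_\Ge\|_{L^2(\GO)}^2+\tfrac12\|w_\Ge\|_{L^2(\GG_+;\weight)}^2\le C\,\Ge^{\frac32+\frac{\theta}{2(\theta+1)}},
\end{equation*}
i.e.\ the exponent $\tfrac34+\tfrac{\theta}{4(\theta+1)}$, which is strictly below $\tfrac34+\tfrac{\theta}{4}$ for every $\theta>0$ (for instance $7/8$ instead of $1$ when $\theta=1$), and never reaches the exponent $1$ of the capped regime. The iteration you propose does not close this gap: each pass pays $\Gd^{-1/2}\|w_\Ge\|_{L^2(\GG_+;\weight)}$, a quantity comparable to the square root of the energy being estimated, so the fixed point of the feedback loop is the same suboptimal exponent.

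The missing idea is to exploit $(\weight)^{-\Ga}\in L^1(\GG_+)$ (together with $\p u/\p n\in L^q(\p\GO)$) on the non-degenerate part as well. The paper does this by performing the weighted H\"older step \emph{globally before} splitting: estimate the boundary term by $\Ge\|\p u/\p n\|_{L^q(\GG_+)}\|w_\Ge\|_{L^{q'}(\GG_+)}$ and then $\|w_\Ge\|_{L^{q'}(\GG_+)}\le C_q\bigl(\int_{\GG_+}|w_\Ge|^2(\weight)^{\theta}\,d\Gs_x\bigr)^{1/2}$, using the integrability of $(\weight)^{-\Ga}$. Splitting this weighted integral at level $\Gd$ gives $(\weight)^{\theta}\le\Gd^{\theta-1}\weight$ on $\{\weight\ge\Gd\}$ and $(\weight)^{\theta}\le\Gd^{\theta}$ on $\{\weight<\Gd\}$ (the latter combined with \eqref{est:trace_L2} at $\Ge'=\Ge^{1/4}$), so \emph{both} pieces carry the factor $\Gd^{\theta/2}$; with the simple choice $\Gd=\Ge^{1/2}$ the two contributions balance at $\Ge^{3/2+\theta/2}$, which is the claimed rate. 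The regime $\theta\ge1$, which produces the cap $\min\{\cdot,1\}$, also needs the separate (easy) observation $(\weight)^{\theta}\le C\,\weight$, reducing matters to the argument of Theorem \ref{thm:1convergence}; your scheme has no mechanism producing this exponent-$1$ case. Your strip estimate and the $d=2$ versus $d\ge3$ bookkeeping are otherwise consistent with the paper, but without the weighted global H\"older step the stated exponents are not reached.
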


\begin{remark}
Theorem \ref{thm:general_convergence1} claims that $r < (3 + \Ga)/4$ when $d = 2$ and $0 \leq \Ga < 1$.
\end{remark}

\begin{proof}
We note that, by Proposition \ref{prop:trace_embedding}, we have
\[
\left\| \frac{\p u}{\p n} \right\|_{L^q(\p \GO)} \leq \| \Gg_{0, q} \| \sum_{i = 1}^d \| u_{x_i} \|_{H^1(\GO)} \leq C_q \| u \|_{H^2(\GO)}
\]
for all $q \geq 2$ when $d = 2$, and for all $2 \leq q \leq 2(d - 1)/(d - 2)$ when $d \geq 3$.

We first prove the case when $d = 2$. Since the surface measure of $\GG_0$ is 0, as in the proof of Theorem \ref{thm:1convergence}, we have
\begin{align*}
&\Ge \| \nabla w_\Ge \|_{L^2(\GO)}^2 + \pos \| w_\Ge \|_{L^2(\GO)}^2 + \frac{1}{2} \| w_\Ge \|_{L^2(\GG_+; \weight)}^2\\
\leq& \Ge \left\| \frac{\p u}{\p n} \right\|_{L^q(\GG_+)} \| w_\Ge \|_{L^{q'}(\GG_+)} + \Ge \| \GD u \|_{L^2(\GO)} \| w_\Ge \|_{L^2(\GO)}\\
\leq& \Ge \| \Gg_{0, q} \| \| u \|_{H^2(\GO)} \| w_\Ge \|_{L^{q'}(\GG_+)} + \Ge \| \GD u \|_{L^2(\GO)} \| w_\Ge \|_{L^2(\GO)}\\
\leq& C_q \Ge \| u \|_{H^2(\GO)} \left( \| w_\Ge \|_{L^{q'}(\GG_+)} + \| w_\Ge \|_{L^2(\GO)} \right),
\end{align*}
where $q \geq 2$ and $q'$ is the H\"older conjugate of $q$, namely, $1/q + 1/q' = 1$.

We give an estimate for the boundary integral $\| w_\Ge \|_{L^{q'}(\GG_+)}$. By the H\"older inequality, we have
\[
\int_{\GG_+} |w_\Ge|^{q'}\,d\Gs_x \leq \left( \int_{\GG_+} (\weight)^{-\Ga}\,d\Gs_x \right)^{1 - \frac{q'}{2}} \left( \int_{\GG_+} |w_\Ge|^2 (\weight)^{\frac{\Ga(2 - q')}{q'}} \,d\Gs_x \right)^{\frac{q'}{2}},
\]
and hence we have
\[
\| w_\Ge \|_{L^{q'}(\GG_+)} \leq C_q \left( \int_{\GG_+} |w_\Ge|^2 (\weight)^{\frac{\Ga(q - 2)}{q}} \,d\Gs_x \right)^{\frac{1}{2}}.
\]

When $\Ga(q-2)/q \geq 1$, we have
\[
\int_{\GG_+} |w_\Ge|^2 (\weight)^{\frac{\Ga(q - 2)}{q}} \,d\Gs_x \leq \| \adv \|_{L^\infty(\GO)}^{\frac{(\Ga - 1)q - 2 \Ga}{q}} \int_{\GG_+} |w_\Ge|^2 (\weight) \,d\Gs_x,
\]
and hence
\[
\| w_\Ge \|_{L^{q'}(\GG_+)} \leq C_q \| w_\Ge \|_{L^2(\GG_+; \weight)}.
\]
Going back to the original estimate, we have
\begin{align*}
&\Ge \| \nabla w_\Ge \|_{L^2(\GO)}^2 + \pos \| w_\Ge \|_{L^2(\GO)}^2 + \frac{1}{2} \| w_\Ge \|_{L^2(\GG_+; \weight)}^2\\
\leq& C_q \Ge \| u \|_{H^2(\GO)} (\| w_\Ge \|_{L^2(\GG_+; \weight)} + \| w_\Ge \|_{L^2(\GO)}),
\end{align*}
and the same argument as in the proof of Theorem \ref{thm:1convergence} gives
\[
\| w_\Ge \|_{L^2(\GO)} + \| w_\Ge \|_{L^2(\GG_+; \weight)} \leq C_q \| u \|_{H^2(\GO)} \Ge
\]
and
\[
\| \nabla w_\Ge \|_{L^2(\GO)} \leq C_q \| u \|_{H^2(\GO)} \Ge^{\frac{1}{2}}.
\]

In what follows, we assume that $\Ga(q-2)/q < 1$. We take a positive constant $\Gd$ and decompose the above integral into two parts:
\[
\int_{\GG_+} |w_\Ge|^2 (\weight)^{\frac{\Ga(q - 2)}{q}} \,d\Gs_x = \int_{\GG_+ \sm \GG_{+, \Gd}} |w_\Ge|^2 (\weight)^{\frac{\Ga(q - 2)}{q}} \,d\Gs_x + \int_{\GG_{+, \Gd}} |w_\Ge|^2 (\weight)^{\frac{\Ga(q - 2)}{q}} \,d\Gs_x,
\]
where the set $\GG_{+, \Gd}$ is defined by
\[
\GG_{+, \Gd} := \{ x \in \GG_+ \mid \weightx < \Gd \}.
\]
For the first term of the right hand side, we have
\begin{align*}
\int_{\GG_+ \sm \GG_{+, \Gd}} (\weight)^{\frac{\Ga(q - 2)}{q}} |w_\Ge|^2\,d\Gs_x \leq& \Gd^{\frac{\Ga (q - 2)}{q} - 1} \int_{\GG_+ \sm \GG_{+, \Gd}} |w_\Ge|^2 \weight \,d\Gs_x\\
  \leq& \Gd^{\frac{\Ga (q - 2)}{q} - 1} \| w_\Ge \|_{L^2(\GG_+; \weight)}^2.
\end{align*}
For the second term, by the H\"older inequality and \eqref{est:trace_L2}, we have
\begin{align*}
\int_{\GG_{+, \Gd}} |w_\Ge|^2 (\weight)^{\frac{\Ga(q - 2)}{q}} \,d\Gs_x \leq& \Gd^{\frac{\Ga (q - 2)}{q}} \| w_\Ge \|_{L^2(\GG_+)}^2\\
\leq& C_q \Gd^{\frac{\Ga (q - 2)}{q}} \left( \Ge^{\frac{1}{4}} \| \nabla w_\Ge \|_{L^2(\GO)} + \Ge^{-\frac{1}{4}} \| w_\Ge \|_{L^2(\GO)} \right)^2.
\end{align*}
Therefore, we have
\begin{align*}
\| w_\Ge \|_{L^{q'}(\GG_+)} \leq& C_q \left( \int_{\GG_+} |w_\Ge|^2 (\weight)^{\frac{\Ga(q - 2)}{q}} \,d\Gs_x \right)^{\frac{1}{2}}\\
  \leq& C_q \Gd^{\frac{\Ga(q - 2)}{2q}} \left( \Gd^{- \frac{1}{2}} \| w_\Ge \|_{L^2(\GG_+; \weight)} + \Ge^{\frac{1}{4}} \| \nabla w_\Ge \|_{L^2(\GO)} + \Ge^{-\frac{1}{4}} \| w_\Ge \|_{L^2(\GO)} \right).
\end{align*}
Going back to the original estimate, we have
\begin{align*}
&\Ge \| \nabla w_\Ge \|_{L^2(\GO)}^2 + \pos \| w_\Ge \|_{L^2(\GO)}^2 + \frac{1}{2} \| w_\Ge \|_{L^2(\GG_+; \weight)}^2\\
\leq& C_q \Ge \| u \|_{H^2(\GO)} \Gd^{\frac{\Ga(q - 2)}{2q}} \left( \Gd^{- \frac{1}{2}} \| w_\Ge \|_{L^2(\GG_+; \weight)} + \Ge^{\frac{1}{4}} \| \nabla w_\Ge \|_{L^2(\GO)} + \Ge^{-\frac{1}{4}} \| w_\Ge \|_{L^2(\GO)} \right) \\
&+ C_q \Ge \| u \|_{H^2(\GO)} \| w_\Ge \|_{L^2(\GO)}\\
\leq& C_q \| u \|_{H^2(\GO)}^2 \Gd^{\frac{\Ga(q - 2)}{q}} \left(\Ge^2 \Gd^{-1} + \Ge^{\frac{3}{2}} \right)\\
&+ \frac{\Ge}{2} \| \nabla w_\Ge \|_{L^2(\GO)}^2 + \frac{\pos}{2} \| w_\Ge \|_{L^2(\GO)}^2 + \frac{1}{4} \| w_\Ge \|_{L^2(\GG_+; \weight)}^2
\end{align*}
or
\begin{align*}
\Ge \| \nabla w_\Ge \|_{L^2(\GO)}^2 + \pos \| w_\Ge \|_{L^2(\GO)}^2 + \frac{1}{2} \| w_\Ge \|_{L^2(\GG_+; \weight)}^2 \leq C_q \| u \|_{H^2(\GO)}^2 \Gd^{\frac{\Ga(q - 2)}{q}} \left(\Ge^2 \Gd^{-1} + \Ge^{\frac{3}{2}} \right)
\end{align*}
for all $0 < \Ge < 1$. Letting $\Gd = \Ge^{1/2}$, we have
\[
\Ge \| \nabla w_\Ge \|_{L^2(\GO)}^2 + \pos \| w_\Ge \|_{L^2(\GO)}^2 + \frac{1}{2} \| w_\Ge \|_{L^2(\GG_+; \weight)}^2 \leq C_q \Ge^{\frac{3}{2} + \frac{\Ga(q - 2)}{2q}}.
\]
Therefore, we have
\[
\| w_\Ge \|_{L^2(\GO)} + \| w_\Ge \|_{L^2(\GG_+; \weight)} \leq C_q \Ge^{\frac{3}{4} + \frac{\Ga(q - 2)}{4q}}
\]
and
\[
\| \nabla w_\Ge \|_{L^2(\GO)} \leq C_q \Ge^{\frac{1}{4} + \frac{\Ga(q - 2)}{4q}}
\]
for all $q \geq 2$ and $0 < \Ge < 1$.

For the case when $d \geq 3$, we replace the parameter $q$ in the above argument by its upper bound $2(d - 1)/(d - 2)$. Then, we have
\[
\frac{\Ga (q - 2)}{q} = \frac{\Ga}{d - 1}.
\]
Thus, when $\Ga \geq d - 1$ we have
\[
\| w_\Ge \|_{L^2(\GO)} + \| w_\Ge \|_{L^2(\GG_+; \weight)} \leq C \| u \|_{H^2(\GO)} \Ge
\]
and
\[
\| \nabla w_\Ge \|_{L^2(\GO)} \leq C \| u \|_{H^2(\GO)} \Ge^{\frac{1}{2}}
\]
for all $0 < \Ge < 1$, and when $\Ga < d - 1$ we have
\[
\| w_\Ge \|_{L^2(\GO)} + \| w_\Ge \|_{L^2(\GG_+; \weight)} \leq C \Ge^{\frac{3}{4} + \frac{\Ga}{4(d - 1)}}
\]
and
\[
\| \nabla w_\Ge \|_{L^2(\GO)} \leq C \Ge^{\frac{1}{4} + \frac{\Ga}{4(d - 1)}}
\]
for all $0 < \Ge < 1$. Therefore, Theorem \ref{thm:general_convergence1} is proved.
\end{proof}

\section{Numerical experiments} \label{sec:Num_ex}

In this section, we present some numerical experiments in the two-dimensional case $d=2$ to verify optimality of convergence rates we have obtained.
In order to compute the convergence rate, we give
functions $\adv$, $\reac$ and $\source$ so that the exact solution $u$ to $\eqref{BVP}$ is known. For each $\eps=1.6^{-k}$, $k=0,1,\ldots, 14$ ($1.6^{-14} \approx 0.0014$), we solve $\eqref{MER}$ by using the finite  element solver FreeFem \cite{H} and obtain a numerical approximation of $L^2$ norms of $\sol-\solER$. Using the least square fitting we numerically estimate the convergence rate and compare it to our theoretical results. In these numerical experiments, the domain $\domain$ is fixed to be the unit square $(0,1)^2$.

For the numerical computation of $\solER$, we use a $\mathrm{P}1$ finite element space associated with a triangulation $\mathcal{T}_h$ whose discretization parameter $h:=\max_{K \in \mathcal{T}_h} \mathrm{diam}(K)$ is about $0.002$.
We remark that our experiments always contain discretization error. In order to lesser their effects, we use a refined mesh whose discretization parameter satisfies $h \approx 0.001$ when we compute $L^2$ errors.

\subsection{Example 1: corresponding to Theorem \ref{thm:1/2convergence}}

Recall that, if the solution $\sol$ to the problem \eqref{BVP} belongs to $H^1_{\bdryin}(\domain)$,
then it follows from Theorem \ref{thm:1/2convergence},  Corollary \ref{cor:H1_strong} and Corollary \ref{cor:1/4convergence_trace} that
\begin{align*}
  &\norm{\sol - \solER}_{L^2(\domain)} + \norm{\sol - \solER}_{L^2(\bdryout;\weight)} \leq C \eps^{\frac{1}{2}}, \\
  &\norm{\nabla(\sol - \solER)}_{L^2(\domain)} \leq C,\\
  &\norm{\sol - \solER}_{L^2(\bdrychar)} \leq C \eps^{\frac{1}{4}}.
\end{align*}
Let $s$ be a positive constant. We take
\begin{gather*}
  \adv(x_1,x_2)=(x_1, 1), \quad \reac(x_1,x_2)=1, \quad
  f(x_1, x_2)=(s+1)x_1^sx_2+x_1^s+x_2+1,
\end{gather*}
and the exact solution to $\eqref{BVP}$ is
\begin{gather*}
  u(x_1, x_2) = (1+x_1^s)x_2,
\end{gather*}
which belongs to $H_{\adv,-}(\domain)$ for all $s>0$ and to $H^1_{\bdryin}(\domain)$ if and only if $s>1/2$.
In particular, we take $s \in (1/2, 1)$, then $u$ is in $H^1_{\bdryin}(\domain) \setminus H^2(\domain)$.
The $L^2$ errors of $u-\solER$ for $s=0.51$ are shown in Figure $\ref{ex:1a}$.
These graphs almost agree with our estimates.

\begin{figure}[H]
    \begin{tabular}{cc}
      \begin{minipage}[t]{0.49\hsize}
        \centering
        \includegraphics[keepaspectratio, scale=0.55]{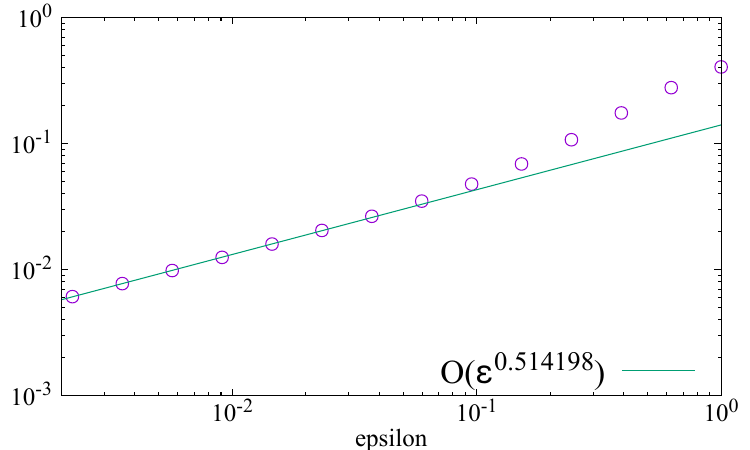}
        \subcaption{$\norm{\sol-\solER}_{L^2(\domain)}$}
        \label{ex:1a_a}
      \end{minipage} &
      \begin{minipage}[t]{0.49\hsize}
        \centering
        \includegraphics[keepaspectratio, scale=0.55]{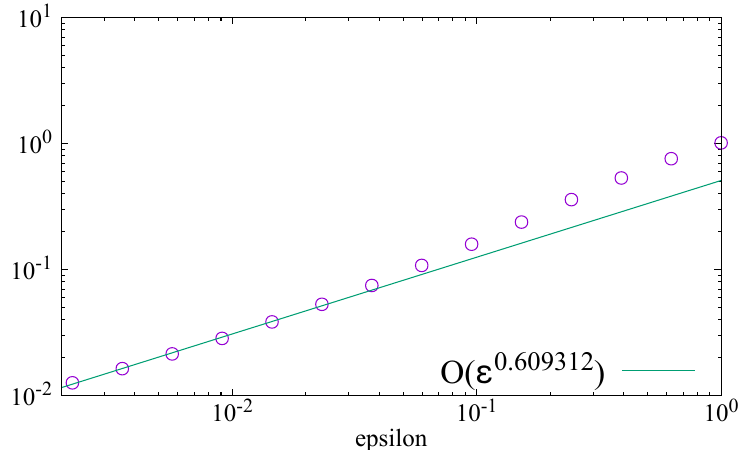}
        \subcaption{$\norm{\sol-\solER}_{L^2(\bdryout;\weight)}$}
        \label{ex:1a_b}
      \end{minipage} \\

      \begin{minipage}[t]{0.49\hsize}
        \centering
        \includegraphics[keepaspectratio, scale=0.55]{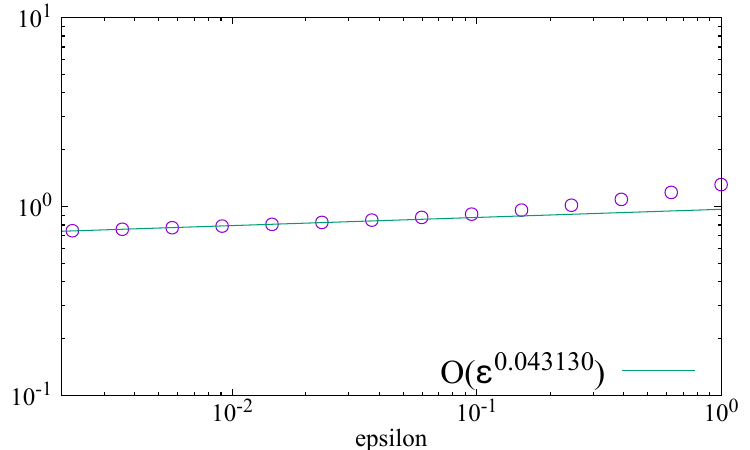}
        \subcaption{$\norm{\nabla(\sol-\solER)}_{L^2(\domain)}$}
        \label{ex:1a_d}
      \end{minipage} &
      \begin{minipage}[t]{0.49\hsize}
        \centering
        \includegraphics[keepaspectratio, scale=0.55]{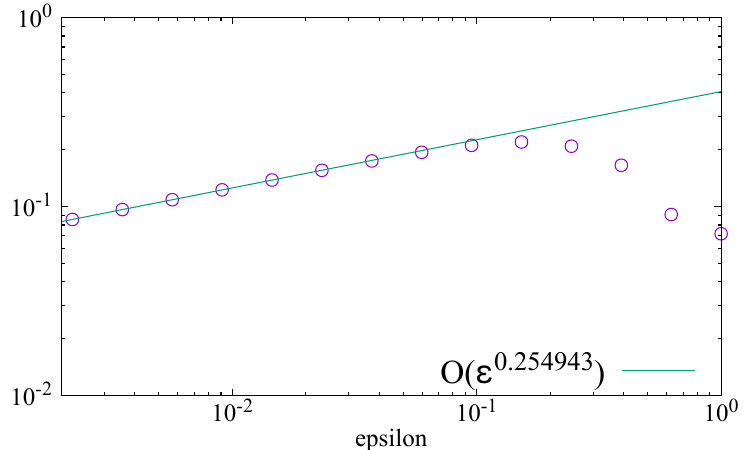}
        \subcaption{$\norm{\sol-\solER}_{L^2(\bdrychar)}$}
        \label{ex:1a_c}
      \end{minipage}

    \end{tabular}
    \caption{Norms of $\sol - \solER$ vs $\eps$ in log-log scale. For the lease square fitting data corresponding to $\eps=1.6^{-k}$, $8 \leq k \leq 14$ are used. }
    \label{ex:1a}
  \end{figure}


If we take $s \in (0,1/2)$, then $u$ no longer belongs to $H^1_{\bdryin}(\domain)$.
The $L^2$ errors of $\sol-\solER$ for $s=0.3$ are shown in Figure $\ref{ex:1b}$. These graphs insist that convergence rate would become worse if $u$ would not have $H^1$ regularity.

\begin{figure}[H]

      \begin{minipage}[t]{0.49\hsize}
        \centering
        \includegraphics[keepaspectratio, scale=0.55]{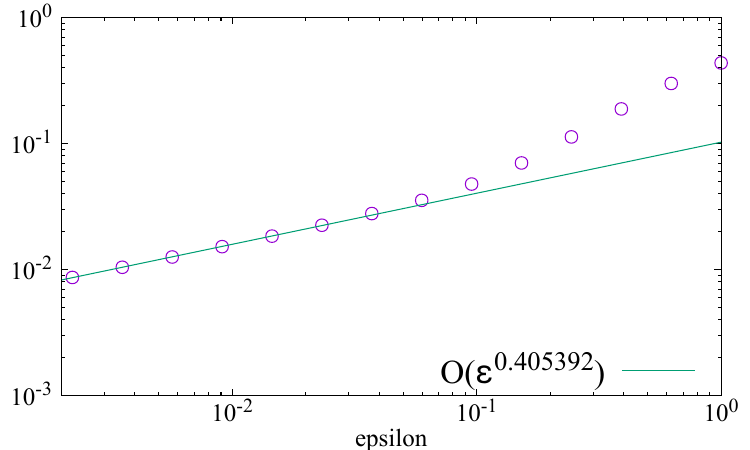}
        \subcaption{$\norm{\sol-\solER}_{L^2(\domain)}$}
        \label{ex:1b_a}
      \end{minipage}
      \begin{minipage}[t]{0.49\hsize}
        \centering
        \includegraphics[keepaspectratio, scale=0.55]{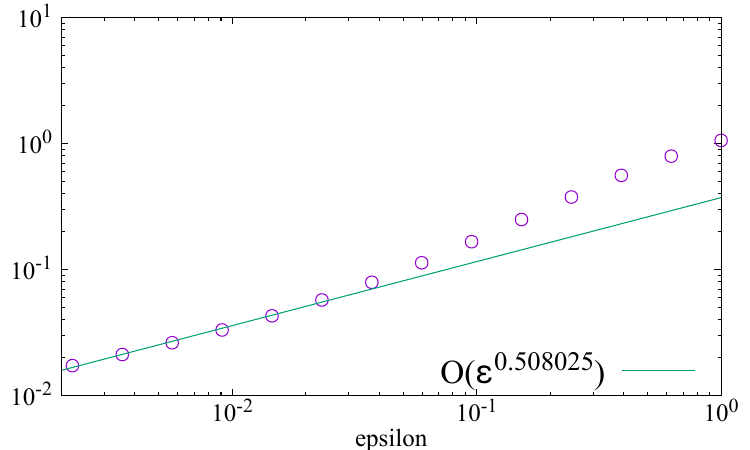}
        \subcaption{$\norm{\sol-\solER}_{L^2(\bdryout;\weight)}$}
        \label{ex:1b_b}
      \end{minipage} \\

      \centering
      \begin{minipage}[t]{0.49\hsize}
        \centering
        \includegraphics[keepaspectratio, scale=0.55]{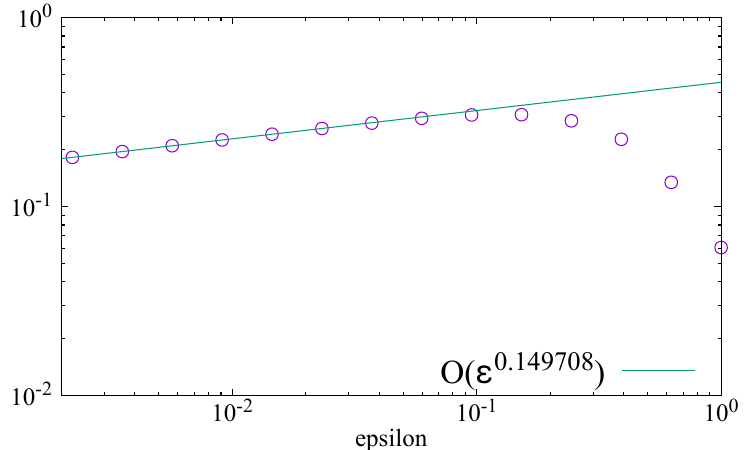}
        \subcaption{$\norm{\sol-\solER}_{L^2(\bdrychar)}$}
        \label{ex:1b_c}
      \end{minipage} 

    \caption{Norms of $\sol - \solER$ vs $\eps$ in log-log scale. For the lease square fitting data corresponding to $\eps=1.6^{-k}$, $8 \leq k \leq 14$ are used. }
    \label{ex:1b}
  \end{figure}


\subsection{Example 2: corresponding to Theorem $\ref{thm:3/4convergence}$}

We proved in Theorem $\ref{thm:3/4convergence}$ and Corollary \ref{cor:1/2convergence_trace} that, if $u \in H^2(\domain) \intersec H_{\adv,-}(\domain)$ solves the problem \eqref{BVP} and $|\bdrychar|>0$, where $\abs{A}$ denotes the surface measure of the set $A$, then it holds that
\begin{align*}
  &\norm{\sol - \solER}_{L^2(\domain)} + \norm{\sol - \solER}_{L^2(\bdryout;\weight)} \leq C \eps^{\frac{3}{4}}, \\
  &\norm{\nabla(\sol - \solER)}_{L^2(\domain)} \leq C \eps^{\frac{1}{4}},\\
  &\norm{\sol - \solER}_{L^2(\bdrychar)} \leq C \eps^{\frac{1}{2}}.
\end{align*}

We take
\begin{gather*}
  \adv(x_1,x_2)=(1, 0), \quad \reac(x_1,x_2)=1, \quad \source(x_1, x_2) = x_1x_2+x_2
\end{gather*}
so that the solution to $\eqref{BVP}$ is
\begin{gather*}
  u(x_1,x_2)=x_1x_2,
\end{gather*}
which belongs to $H^2(\domain) \intersec H_{\adv,-}(\domain)$.
In Figure $\ref{ex:2}$ we show our results of computation.

\begin{figure}[H]
    \begin{tabular}{cc}
      \begin{minipage}[t]{0.49\hsize}
        \centering
        \includegraphics[keepaspectratio, scale=0.55]{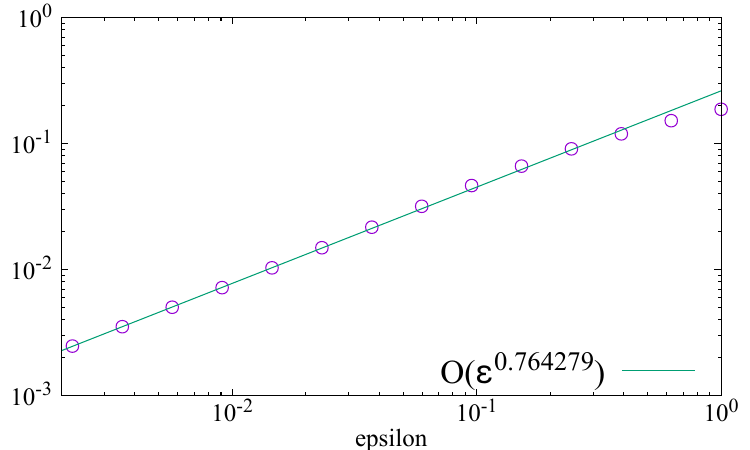}
        \subcaption{$\norm{\sol-\solER}_{L^2(\domain)}$}
        \label{ex:2_a}
      \end{minipage} &
      \begin{minipage}[t]{0.49\hsize}
        \centering
        \includegraphics[keepaspectratio, scale=0.55]{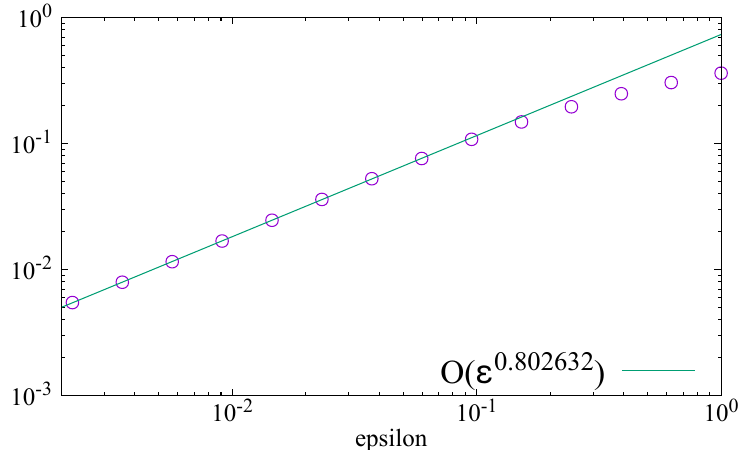}
        \subcaption{$\norm{\sol-\solER}_{L^2(\bdryout;\weight)}$}
        \label{ex:2_b}
      \end{minipage} \\

      \begin{minipage}[t]{0.49\hsize}
        \centering
        \includegraphics[keepaspectratio, scale=0.55]{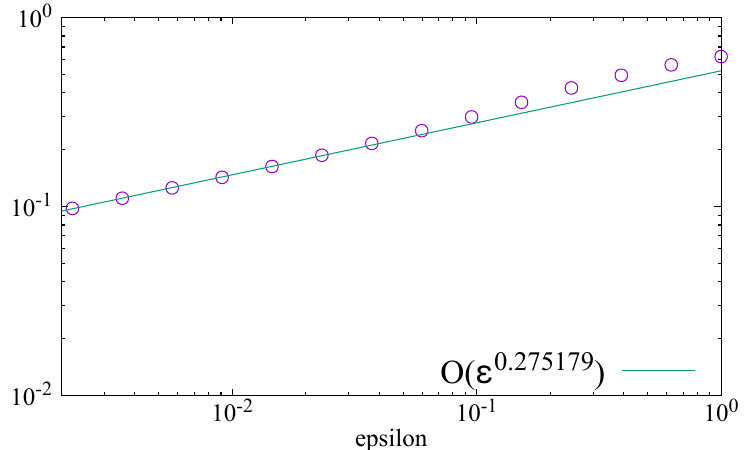}
        \subcaption{$\norm{\nabla(\sol-\solER)}_{L^2(\domain)}$}
        \label{ex:2_d}
      \end{minipage} &
      \begin{minipage}[t]{0.49\hsize}
        \centering
        \includegraphics[keepaspectratio, scale=0.55]{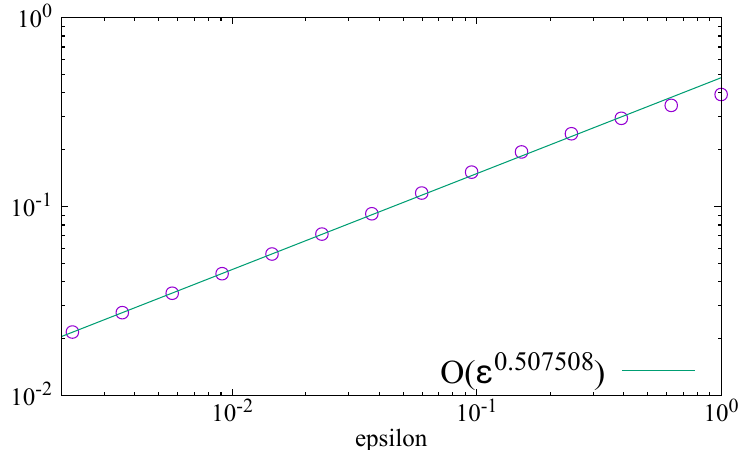}
        \subcaption{$\norm{\sol-\solER}_{L^2(\bdrychar)}$}
        \label{ex:2_c}
      \end{minipage}

    \end{tabular}
    \caption{Norms of $\sol - \solER$ vs $\eps$ in log-log scale. For the lease square fitting data corresponding to $\eps=1.6^{-k}$, $8 \leq k \leq 14$ are used. }
    \label{ex:2}
  \end{figure}


\subsection{Example 3: corresponding to Theorem $\ref{thm:1convergence}$}

By Theorem $\ref{thm:1convergence}$, if $u \in H^2(\GO) \intersec H_{\adv, -}(\GO)$ solves the problem \eqref{BVP}, $\abs{\bdrychar}=0$ and $\weight$ is uniformly positive on $\bdryout$, then it holds that
\begin{align*}
  &\norm{\sol - \solER}_{L^2(\domain)} + \norm{\sol - \solER}_{L^2(\bdryout;\weight)} \leq C \eps, \\
  &\norm{\nabla(\sol - \solER)}_{L^2(\domain)} \leq C\eps^{\frac{1}{2}}.
\end{align*}
In this example, we take
\begin{gather*}
  \adv(x_1,x_2)=(1, 1), \quad \reac(x_1,x_2)=1,\quad \source(x_1, x_2) = (4x_2+1)\sin(4x_1)+\cos(4x_1),
\end{gather*}
which satisfy $\eqref{b0}$ and $f \in L^2(\domain)$. The exact solution to $\eqref{BVP}$ is
\begin{gather*}
  u(x_1,x_2)=x_2\sin(4x_1),
\end{gather*}
which belongs to $H^2(\domain) \intersec H_{\adv,-}(\domain)$. Figure $\ref{ex:3}$ shows that the rate of convergence of $L^2$ errors is approximately equal to $1$, which is expected by the theorem.

\begin{figure}[H]
  \begin{minipage}[t]{0.49\hsize}
    \centering
    \includegraphics[keepaspectratio, scale=0.55]{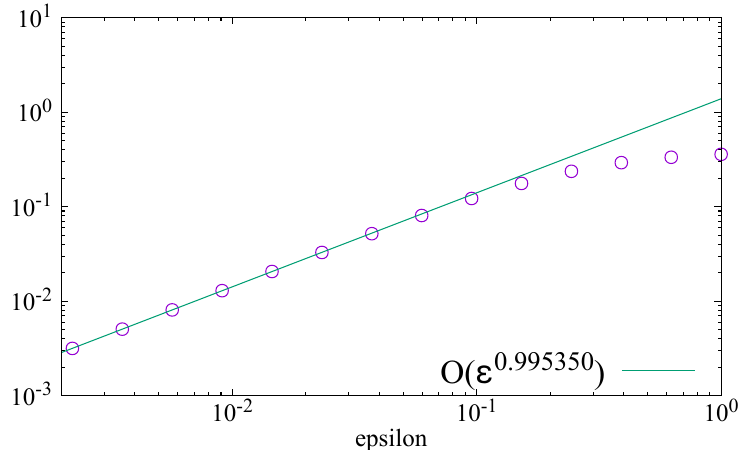}
    \subcaption{$\norm{\sol-\solER}_{L^2(\domain)}$}
    \label{ex:3_a}
  \end{minipage}
  \begin{minipage}[t]{0.49\hsize}
    \centering
    \includegraphics[keepaspectratio, scale=0.55]{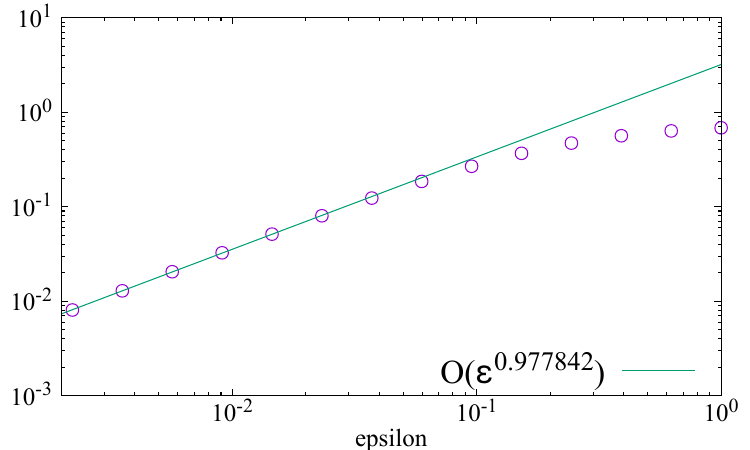}
    \subcaption{$\norm{\sol-\solER}_{L^2(\bdryout;\weight)}$}
    \label{ex:3_b}
  \end{minipage}
  \\
  \centering
  \begin{minipage}[t]{0.49\hsize}
    \includegraphics[keepaspectratio, scale=0.55]{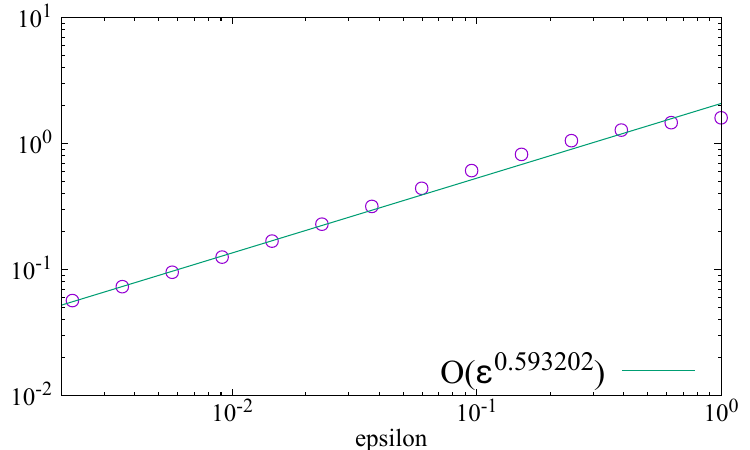}
    \subcaption{$\norm{\nabla(\sol-\solER)}_{L^2(\domain)}$}
    \label{ex:3_d}
  \end{minipage}
    \caption{Norms of $\sol - \solER$ vs $\eps$ in log-log scale. For the lease square fitting data corresponding to $\eps=1.6^{-k}$, $8 \leq k \leq 14$ are used. }
    \label{ex:3}
  \end{figure}


\subsection{Example 4: corresponding to Theorem $\ref{thm:general_convergence1}$}
Theorem $\ref{thm:general_convergence1}$ says that if $\sol \in H^2(\GO) \intersec H_{\adv, -}(\GO)$ solves the problem \eqref{BVP} and $\abs{\bdrychar}=0$, then the convergence rate depends on the maximum exponent $\alpha$ satisfying $(\weight)^{-\alpha} \in L^1(\bdryout)$.
In order to check this, we consider the following family of vector fields parametrized by $s>0$:
\begin{gather*}
  \adv_s =(1-x_1+(1-x_2)^s, 1+x_2).
\end{gather*}
For each $\adv_s$, $\bdryout=\bdry_t \union \bdry_r$, where $\bdry_t=(0,1) \times \{1\}$ and $\bdry_r:=\{1\} \times (0,1)$.
On $\bdry_t$ and $\bdry_r$, we have
\begin{align*}
  \int_{\bdry_t}(\weight)^{-\alpha} d\sigma_x&=\int_0^1 (\adv_s(x_2,1) \cdot \normal(x_1, 1))^{-\alpha} dx_1 = 2^{-\alpha}, \\
  \int_{\bdry_r}(\weight)^{-\alpha} d\sigma_x &= \int_0^1(\adv_s(1, x_2) \cdot \normal(1,x_2))^{-\alpha}dx_2 =
  \int_0^1 (1-x_2)^{-s\alpha} dx_2,
\end{align*}
respectively.
So $(n \cdot \adv_s)^{-\alpha} \in L^1(\bdryout)$ if and only if $s \alpha <1$.
Now we take
\begin{gather*}
  \sol(x_1, x_2)=(e^{x_1}-1)\sin(x_2), \quad \reac=1,
\end{gather*}
and set $f_s:=\adv_s \cdot \nabla \sol + \reac \sol$ for each $s>0$.
We numerically compute the convergence rate of $\norm{u-\solER}_{L^2(\domain)}$ for $f=f_s$ by least square fitting.
In Figure $\ref{ex:4}$ we show the relatioin between the convergence rate and $\alpha=1/s$,
which is also expected by the theorem.
\begin{figure}[H]
  \centering
  \includegraphics[width=0.7\linewidth]{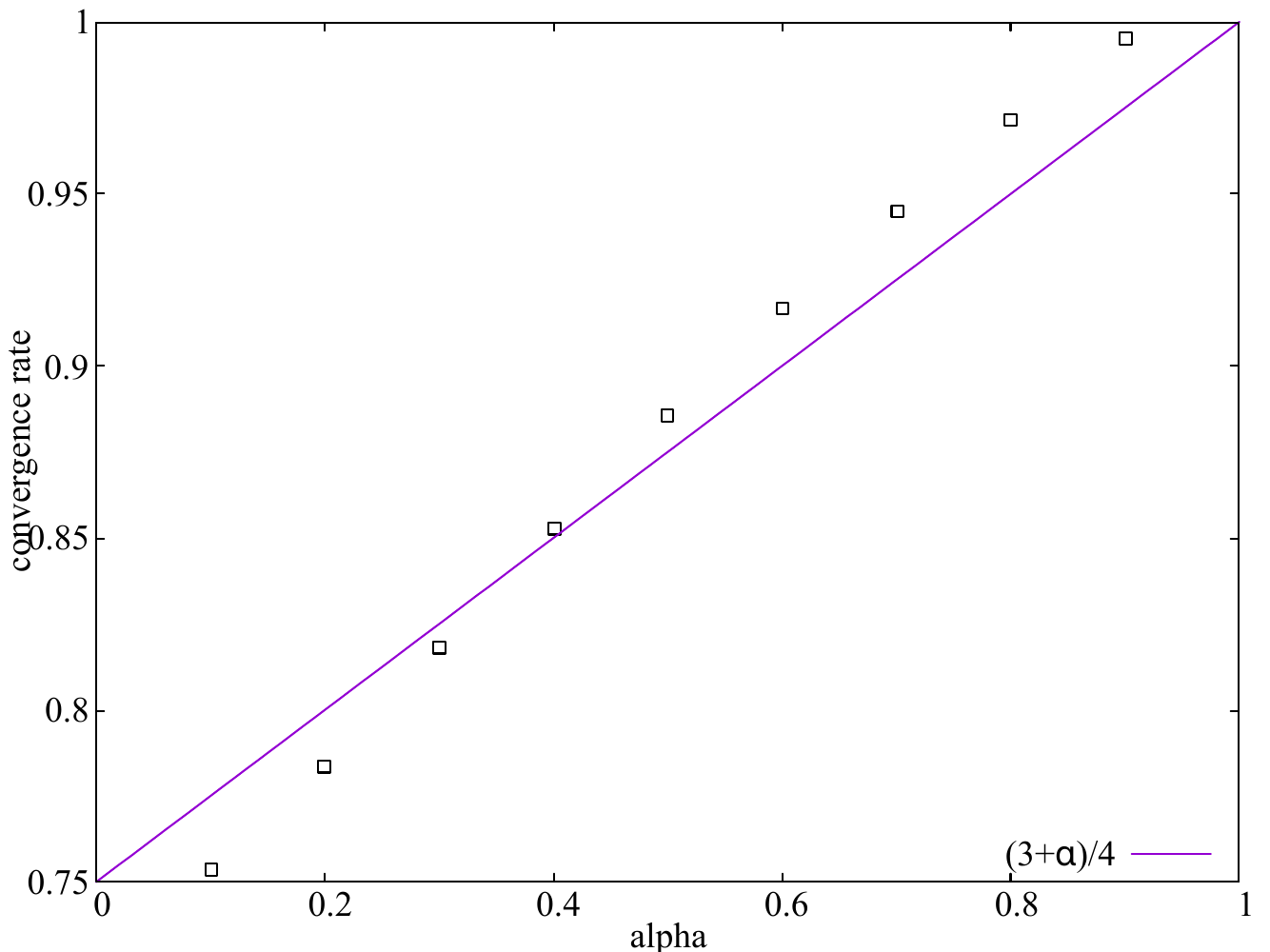}
  \caption{Convergence rate of $\norm{u-\solER}_{L^2(\domain)}$ vs $\alpha$. The line in above graph is $(3+\alpha)/4$, which is the expected convergence rate. } \label{ex:4}
\end{figure}

\section*{Acknowledgement}
The authors thank Professor Emeritus Yuusuke Iso for suggesting this problem. They also would like to express their gratitude to Professor Emeritus Gert Lube for his introducing some previous works. They thank Professor Hiroshi Fujiwara for giving helpful advice from the viewpoint of numerical analysis and numerical computation. This work was supported by JST Grant Number JPMJFS2123 and by JSPS KAKENHI Grant Numbers JP21H00999, JP20K14344.

\end{document}